%% file: main.tex
\documentclass[12pt,twoside]{article}
\usepackage{etex}
\usepackage[dvipsnames]{xcolor} 
\usepackage{color}
\usepackage{tcolorbox}
\usepackage{booktabs}
\usepackage{amsthm}
\usepackage{listings} 
\usepackage{amsfonts,amssymb,amsxtra,url,float} 
\allowdisplaybreaks[4] 
\usepackage[colorlinks,
  linkcolor=magenta, %
  anchorcolor=Periwinkle,
  citecolor=red,
  urlcolor=blue
  ]{hyperref} 
\usepackage{enumitem}
\usepackage{geometry} 
\usepackage{rotating} 
\usepackage{lscape} 
\usepackage{multirow}
\usepackage{graphicx} 
\usepackage{subfigure} 
\usepackage{tikz}
\usepackage{pgfplots}
\usepackage{tikz-3dplot}
\usetikzlibrary{patterns}
\usetikzlibrary{3d,calc}
\usetikzlibrary{decorations.pathreplacing,decorations.markings}
 \tikzset{
  on each segment/.style={
    decorate,
    decoration={
      show path construction,
      moveto code={},
      lineto code={
        \path [#1]
        (\tikzinputsegmentfirst) -- (\tikzinputsegmentlast);
      },
      curveto code={
        \path [#1] (\tikzinputsegmentfirst)
        .. controls
        (\tikzinputsegmentsupporta) and (\tikzinputsegmentsupportb)
        ..
        (\tikzinputsegmentlast);
      },
      closepath code={
        \path [#1]
        (\tikzinputsegmentfirst) -- (\tikzinputsegmentlast);
      },
    },
  },
  mid arrow/.style={postaction={decorate,decoration={
        markings,
        mark=at position 0.6 with {\arrow[#1]{stealth}} 
      }}},
}
\usetikzlibrary{arrows}
\usetikzlibrary{trees}

\usetikzlibrary{matrix}
\usetikzlibrary{patterns}
\usetikzlibrary{shadings} 
\usepackage{fancyhdr} 
\def\headertitle{Phantom subcategories on blow-ups of Hirzebruch surfaces}
\def\fstpage{1} 
\def\page{$\begin{matrix} {\color{white}0} \\ \thepage \end{matrix}$} 

\usepackage[all]{xy} 
\usepackage{dsfont} 
\usepackage{cite}
\usepackage{mathrsfs} 
\numberwithin{figure}{section}
\usepackage{marginnote} 
\usepackage{graphicx} 
\usepackage{multicol} 

\usepackage{enumitem}
\setenumerate[1]{itemsep=0pt,partopsep=0pt,parsep=\parskip,topsep=3pt}
\setitemize[1]{itemsep=0pt,partopsep=0pt,parsep=\parskip,topsep=3pt}
\setdescription{itemsep=0pt,partopsep=0pt,parsep=\parskip,topsep=3pt}
\setlist[itemize]{leftmargin=35pt}
\setlist[enumerate]{leftmargin=35pt}
\usepackage{changepage} 

\usepackage{contour} 
\usepackage{xcolor}
\contourlength{0.03em}
\contournumber{auto}

\newtheorem{theorem}{Theorem}[section]
\newtheorem{lemma}[theorem]{Lemma}
\newtheorem{corollary}[theorem]{Corollary}
\newtheorem{main theorem}[theorem]{Main Theorem}
\newtheorem{proposition}[theorem]{Proposition}
\newtheorem{definition}[theorem]{Definition}

\newtheorem{remark}[theorem]{Remark}

\newtheorem{conjecture}[theorem]{Conjecture}

\numberwithin{equation}{section}

\def\orcid{
\begin{tikzpicture}[baseline=-1mm]
\filldraw[Green!35] (0,0) circle (5pt);
\filldraw[white] (0,0) node{\tiny\textbf{iD}};
\end{tikzpicture}
}

\usepackage{extarrows}

\def\orcid{
\begin{tikzpicture}[baseline=-1mm]
\filldraw[Green!35] (0,0) circle (5pt);
\filldraw[white] (0,0) node{\tiny\textbf{iD}};
\end{tikzpicture}
}

\title{\bf
Phantom subcategories on blow-ups of Hirzebruch surfaces
}

\vspace{5mm}

\author{
Zeyuan He$^{\ref{Author1}}$,
Yu-Zhe Liu$^{\ref{Author1}, \href{https://orcid.org/0009-0005-1110-386X}{\orcid}\ref{orcid2},\ref{CorrespondingAuthor}}$,
Mingzhi Sheng$^{\ref{Author3}}$,
Panyue Zhou$^{\ref{Author4}}$
}
\date{ }
\begin{document}






\maketitle

\begin{enumerate}[label=\textbf{\color{red}\arabic*}] \footnotesize
  \item
    \begin{center}
      School of Mathematics and Statistics,
      Guizhou University, Guiyang 550025, Guizhou, China;
    \end{center}
    \label{Author1}


  \item
    \begin{center}
      School of Science, Hunan City University, Yiyang, 413000, Hunan, China;
    \end{center}
    \label{Author3}

  \item
    \begin{center}
      School of Mathematics and Statistics, Changsha University of Science and Technology,

      410114 Changsha, Hunan, China;
    \end{center}
    \label{Author4}
  \item[]
    \begin{center}
      E-mail:
      \url{liuyz@gzu.edu.cn} / \url{yzliu3@163.com} (Y. Z. Liu);

      \url{panyuezhou@163.com} (P. Zhou)
    \end{center}
\end{enumerate}
\vspace{1mm}
\begin{enumerate}[label=\textbf{\color{red}$\ddag$}]
  \item \footnotesize
    \begin{center}
      Corresponding author
    \end{center} \label{CorrespondingAuthor}
\end{enumerate}
\vspace{1mm}
\begin{enumerate}[leftmargin=6.5cm] \footnotesize
  \item[\orcid]
      ORCID: \href{https://orcid.org/0009-0005-1110-386X}{0009-0005-1110-386X}
      \label{orcid2} 
\end{enumerate}

\hspace{2mm}



\begin{adjustwidth}{1cm}{1cm} 
\noindent \footnotesize 
\textbf{Abstract.}
We prove that the blow-up of the Hirzebruch surface $\mathbb F_n$ at $n+6$ general points admits a phantom subcategory for every $n\geqslant 3$.

\noindent\textbf{Key words.}
Hirzebruch surface, phantom category, exceptional collection, blow-up

\noindent\textbf{2010 Mathematics Subject Classification.}
14F08 
14J26 
18G80 

\end{adjustwidth}

%
%
%




\setcounter{tocdepth}{1}
\setcounter{secnumdepth}{3}


\def\gldim{\mathrm{gl.dim}}
\def\perm{\mathrm{perm}}
\def\forb{\mathrm{forb}}
\def\rmv{\mathsf{v}}
\def\rmp{\mathsf{w}}


\section{Introduction}

\def\kk{\Bbbk}
\def\CC{\mathbb{C}}
\def\FF{\mathbb{F}}
\def\PP{\mathbb{P}}
\def\ZZ{\mathbb{Z}}
\def\>={\geqslant}
\def\=<{\leqslant}

\def\HHomolo{\mathrm{HH}}
\def\Homolo{\mathrm{H}}

\def\calA{\mathcal{A}}
\def\calB{\mathcal{B}}
\def\calC{\mathcal{C}}
\def\calD{\mathcal{D}}
\def\calE{\mathcal{E}}
\def\calF{\mathcal{F}}
\def\calG{\mathcal{G}}
\def\calH{\mathcal{H}}
\def\calI{\mathcal{I}}
\def\calJ{\mathcal{J}}
\def\calK{\mathcal{K}}
\def\calL{\mathcal{L}}
\def\calM{\mathcal{M}}
\def\calN{\mathcal{N}}
\def\calO{\mathcal{O}}
\def\calP{\mathcal{P}}
\def\calQ{\mathcal{Q}}
\def\calR{\mathcal{R}}
\def\calS{\mathcal{S}}
\def\calT{\mathcal{T}}
\def\calU{\mathcal{U}}
\def\calV{\mathcal{V}}
\def\calW{\mathcal{W}}
\def\calX{\mathcal{X}}
\def\calY{\mathcal{Y}}
\def\calZ{\mathcal{Z}}

\def\bfP{\mathbf{P}}
\newcommand{\bfp}{\boldsymbol p}

\def\Bl{\mathrm{Bl}}
\def\Open{\mathrm{Open}}
\def\op{\mathrm{op}}
\def\Conf{\mathrm{Conf}}
\def\phac{\mathrm{ph}_{\mathrm{ac}}}
\def\ph{\mathrm{ph}}

\def\Hom{\mathrm{Hom}}
\def\End{\mathrm{End}}
\def\Ext{\mathrm{Ext}}
\def\Picagroup{\mathscr{P}\!ic}

\def\Tot{\mathrm{Tot}}
\def\divisor{\mathrm{div}}
\def\Div{\mathrm{Div}}
\def\Aff{\mathbf{A}}

\def\Ring{\mathsf{Ring}}
\def\Modcat{\mathsf{Mod}}
\def\Coh{\mathsf{Coh}}
\def\Dcat{\mathsf{D}}

\newcommand{\defines}[1]{{\color{blue!75}\it #1}}

Let $X$ be a smooth projective variety over $\CC$.
A nonzero admissible subcategory $\calP\subseteq\Dcat^b(X)$ is called a phantom if its Grothendieck group and Hochschild homology vanish, and Gorchinskiy and Orlov constructed phantom categories in \cite{GO2013}.
B\"{o}hning, Graf von Bothmer, Katzarkov, and Sosna obtained further examples from exceptional collections on Barlow surfaces \cite{BGKS2015}.
In \cite{Krah2024}, Krah constructed a phantom on the blow-up of $\PP^2$ at ten general points.
His construction uses a non-full exceptional collection of line bundles of maximal length.
Kemboi et al.\ constructed phantoms on the blow-up of $\PP^2$ at eleven general points and on the blow-up of $\FF_2$ at nine general points \cite[Theorems 1.1 and 1.3]{KKLetc2025}.
The existence part of their Conjecture 4.11 predicts a phantom on the blow-up of $\FF_n$ at $6+\max\{3,n\}$ general points \cite[Conjecture 4.11]{KKLetc2025}.

In this paper, we prove this existence statement for every $n \>= 3$. The result is stated in Theorem \ref{thm:main-intro}.
We first study the blow-up of $\FF_3$ at nine points.
We construct thirteen line bundles using an involution of the Picard lattice (Lemma~\ref{lem:isometry}).
We check the required vanishing by interpolation (Proposition \ref{prop:computation}).
Kuznetsov's pseudoheight criterion shows that the resulting exceptional collection is not full \cite[Corollary 6.2]{Kuz2015}.
Its orthogonal is a phantom by Theorem \ref{thm:base}.

For $n>3$, we use elementary transformations to obtain a blow-up of $\FF_3$ (Lemma~\ref{lem:contraction}).
We then transport the phantom by Orlov's blow-up formula \cite{O1992}.
Section \ref{sec:tools} collects the tools used in the proof.
Section \ref{sec:base} treats $\FF_3$, and Section~\ref{sec:propagation} treats the remaining cases.
The {proof of the existence part of} \cite[Conjecture 4.11]{KKLetc2025} in this paper used \texttt{DeepSeek V 4.1 Flash} and \texttt{ChatGPT 6.0}.

\section{Preliminaries}\label{sec:preliminaries}

\textsl{This section recalls the definitions of Hirzebruch surfaces, blow-ups, phantom subcategories, and states the conjecture of Kemboi et al.}

\subsection{Hirzebruch surfaces}

Let $\CC$ be the complex field in this paper, and let $\CC^{\times} = \CC\backslash\{0\}$.
A \defines{{\rm(}complex{\rm)} projective line} $\PP^1$ is defined as
\[ \PP^1 = \{[x_0:x_1]: (x_0,x_1)\in \CC^2\backslash\{(0,0)\}\}, \]
whose element $[x_0:x_1] = \{(cx_0, cx_1): c\in \CC^{\times}\}$ is a subset of $\CC^2\backslash\{(0,0)\}$.
Thus, $[x_0:x_1]=[x_0':x_1']$ if and only if there is a $c\in \CC^{\times}$ such that $x_0'=cx_0$ and $x_1'=cx_1$.
One can check that $\PP^1 = \{[t:1]: t\in\CC\} \cup \{[1:0]\} \cong \CC\cup\{\infty\}$,
cf. \cite[Chapter I, Section 2]{Hart1977}.

Let $\Open(\PP^1)$ be a category whose objects are open sets of $\PP^1$,
and whose morphisms are given by the the inclusion relation ``$\subseteq$'' of sets.
Let $\Ring$ be the category of rings.
Consider the following functor, which is called a \defines{structure sheaf} in algebraic geometry,
\[ \calO:=\calO_{\PP^1} : \Open(\PP^1)^{\op} \to \Ring, \quad U \mapsto \calO(U), \]
which sends each open set $U$ of $\PP^1$ to the ring $\calO(U) = \{f:U \to \CC : f \text{ is a regular function}\}$,
see \cite[Chapter II, Sections 1--2]{Hart1977}.
Write
\begin{align*}
 & U_0 = \{[1:t]: t\in \CC\}, \\
 & U_1 = \{[s:1]: s\in \CC\}.
\end{align*}
Then, on the intersection $\CC^{\times} = U_0\cap U_1$, we have $s=\frac{1}{t}$.
Given an integer $n\in\ZZ$, a \defines{Serre twisting sheaf} $\calO(n)$ of $\calO$,
see \cite[Chapter II, Section 5]{Hart1977}, is the functor
\[ \calO(n):= \calO_{\PP^1}(n): \Open(\PP^1)^{\op} \to \mathsf{Ab}, \]
which sends each open set $U$ of $\PP^1$ to the left $\calO_{\PP^1}(U)$-module $\calO(n)(U) = \{(f_0, f_1)\in \calO(U\cap U_0) \times \calO(U\cap U_1): f_0(t) = t^n f_1(\frac{1}{t}) = s^{-n}f_1(s)$ on the subset $U\cap \CC^{\times}\}$,
where:
\begin{itemize}
  \item the pair $(f_0,f_1)$ consists of local representatives of a section in two trivializations; the representatives need not define a single $\CC$-valued function on $U$;
  \item the left $\calO_{\PP^1}$-action is induced by
    \[ \calO(U) \times \calO(n)(U) \to \calO(n)(U), (g,(f_0,f_1))\mapsto (g|_{U\cap U_0}f_0,g|_{U\cap U_1}f_1) \]
    (thus, since this holds for every open set $U$, we say that $\calO(n)$ is a left $\calO_{\PP^1}$-module).
\end{itemize}
In particular, we have $\calO(0)=\calO$.

For each $p\in\PP^1$ define
\[ \calO_p = \mathop{\underrightarrow{\lim}}\limits_{p\in U} \calO_{\PP^1}(U)
\text{ and } \calO(n)_p = \mathop{\underrightarrow{\lim}}\limits_{p\in U} \calO_{\PP^1}(n)(U). \]
Here $\mathfrak m_p=\{[f]_p\in\calO_p:f(p)=0\}$ is the unique maximal ideal of the local ring $\calO_p$.
Then the fiber $\calO(p):=\calO_p/\mathfrak m_p\calO_p$ corresponding to $\calO_p$ is isomorphic to $\CC$,
and the fiber $\calO(n)(p):=\calO(n)_p/\mathfrak m_p\calO(n)_p$ corresponding to $\calO(n)_p$ is isomorphic to $\CC$.
One can check that the fiber of $\calO\oplus \calO(n)$ at the point $p\in\PP^1$, regarded as $\CC^{\oplus 2}$, is a two-dimensional vector space, whose projectivization, say $F_{n,p}$, is isomorphic to a projective line $\PP^1$.

\begin{definition}[{\cite[Chapter V, Section 2]{Hart1977}}]
Take a base space $\PP^1 = \CC\cup\{\infty\}$. A \defines{Hirzebruch surface} is defined as a fiber bundle
\[ \FF_n := \bfP_{\PP^1}(\calO\oplus\calO(n)) := \bigsqcup_{p\in\PP^1} F_{n,p}
 \cong \{ (p,\ell) : p \in \PP^1,\ \ell\in F_{n,p} \}. \]
\end{definition}

\noindent
In particular, for $n=0$, we have
\[ \FF_0 \cong \PP^1 \times \PP^1. \]

\subsection{Blow-ups}

Let $X$ be a smooth surface over $\CC$, and let $p\in X$ be a closed point.

\begin{definition} \rm \
\begin{enumerate}[label={\rm(\arabic*)}]
  \item \cite[Chapter I, Section 1]{Hart1977} An \defines{irreducible curve} on $X$ is a closed subset of $X$ of dimension one such that it cannot be written as the union of two proper closed subsets.
  \item \cite[Chapter I, Section 1]{Beauville1996} For two distinct irreducible curves $c_1$ and $c_2$ on $X$ meeting transversely at every point of $c_1\cap c_2$, their \defines{intersection number} $c_1\cdot c_2$ is defined as $|c_1\cap c_2|$.
  \item \cite[Chapter II, Section 6]{Hart1977} A \defines{divisor} on $X$ is a formal $\ZZ$-linear combination of irreducible curves, i.e., an element of the free abelian group generated by the irreducible curves of $X$. The group of divisors is denoted by $\Div(X)$.
\end{enumerate}
\end{definition}

\begin{definition}[{\!\!\cite[Chapter II, Section 6]{Hart1977}}] \rm
The \defines{Picard group} $\Picagroup(X)$ is the group of isomorphism classes of line bundles on $X$, with tensor product as its group operation. Since $X$ is smooth, it is naturally identified with the group of divisors modulo linear equivalence.
Here, $D\sim D'$ means that $D-D'=\divisor(f)$ for some nonzero rational function $f$ on $X$.
\end{definition}

\begin{definition}[{\!\!\cite[Chapter V, Section 3]{Hart1977}}] \rm
The \defines{blow-up} of $X$ at $p$ is a smooth surface $\Bl_p X$ together with a morphism
\[\pi_{X,p}: \Bl_p X \to X\]
such that:
\begin{itemize}
  \item $\pi_{X,p}$ is an isomorphism over $X\backslash\{p\}$,
  \item the fiber $\pi_{X,p}^{-1}(p)$ is a curve $E\cong\PP^1$, called the \defines{exceptional curve}.
\end{itemize}
\end{definition}

Locally, choose local coordinates $(x,y)$ on a neighborhood $U$ centered at $p$; then we have
\[\pi_{X,p}^{-1}(U) = \{((x,y),[u:v])\in U\times\PP^1 \mid xv=yu\},\]
and $E=\pi_{X,p}^{-1}(0,0)=\{0\}\times\PP^1$. One has $E^2=-1$.
More generally, for distinct points $p_1,\ldots,p_r\in X$, we write
\[ \pi_{X, p_1,\ldots,p_r}:\Bl_{p_1,\dots,p_r}X \to X\]
for the blow-up at points $p_1,\ldots,p_r$, with exceptional curves $E_1,\ldots,E_r$. They satisfy
\[E_i^2=-1,\text{ and } E_i\cdot E_j=0 \  (i\neq j).\]
For a divisor $D$ on $X$, we denote by $\pi_{X, p_1,\ldots,p_r}^*(D)$ its pullback to $\Bl_{p_1,\dots,p_r}X$, which is characterized by
\[ \pi_{X, p_1,\ldots,p_r}^*(D)\cdot\pi_{X, p_1,\ldots,p_r}^*(D') = D\cdot D'
 \quad\text{and}\quad \pi_{X, p_1,\ldots,p_r}^*(D)\cdot E_i=0 ~ (1\=< i\=< r).\]

\subsection{Phantom subcategories}

Recall that a \defines{sheaf of $\calO_X$-modules}, or simply an \defines{$\calO_X$-module}, is a sheaf $\calF$ on $X$ such that for every open subset $U\subseteq X$, the group $\calF(U)$ is an $\calO_X(U)$-module, and the restriction maps are compatible with the module structure.
Here, an $\calO_X$-module is \defines{of finite type} if it is locally generated by finitely many sections.
See \cite[Chapter II, Section 5]{Hart1977}.

\begin{definition}[{See \cite[Section 2, no. 12, Definition 2]{Serre1955}, or \cite[Tag 01BV]{StacksProject}}] \rm
An $\calO_X$-module $\calF$ is \defines{coherent} if it is of finite type and the kernel of every morphism $\calO_X^{\oplus m}|_U\to\calF|_U$ is of finite type for every open $U\subseteq X$ and every $m\>= 0$.
We write $\Coh(X)$ for the Abelian category of coherent sheaves on $X$, and $\Dcat^b(X)$ for its bounded derived category.
\end{definition}

For objects $E,F \in \Dcat^b(X)$, we write
\[ \Hom(E,F[r]) = \Ext^r(E,F) \]
for the group of morphisms of degree $r$ ($\in\ZZ$).

\begin{definition}[{\!\!\cite[Section 1.1]{KuzPol2016}}] \rm
An object $E\in\Dcat^b(X)$ is \defines{exceptional} if
\[ \Hom(E,E[r]) =
\begin{cases}
  \CC, & r=0,\\
    0, & r\ne 0.
\end{cases} \]
An ordered collection $(E_0,\ldots,E_s)$ of exceptional objects is an \defines{exceptional collection} if
$\Hom(E_j,E_i[r])$ $= 0$ holds for all $i<j$ and $r\in\ZZ$.
We call that it is \defines{full} if it generates $\Dcat^b(X)$.
\end{definition}

A full triangulated subcategory $\calA\subseteq\Dcat^b(X)$ is \defines{admissible} if
its inclusion has both a left and a right adjoint, see \cite[Definition 1.1]{GO2013}.
For an exceptional collection $\calE=(E_0,\ldots,E_s)$, the subcategory it generates is admissible,
and its \defines{right orthogonal}
\[ \calE^\perp = \{T\in\Dcat^b(X) : \Hom(E_i,T[r])=0 \text{ for all } i,r\} \]
appears in a \defines{semiorthogonal decomposition} (see {\cite[Section 2.1]{KKLetc2025}}) $\Dcat^b(X)=\langle \calE^\perp,E_0,\ldots,E_s\rangle$,
where our convention for $\langle\calA,\mathcal{B}\rangle$ is that $\Hom(B,A[r])=0$ for $A\in\calA$, $B\in\calB$, and $r\in\ZZ$.

\begin{definition}[{\!\!\cite[Definition 1.8]{GO2013}}] \rm
A \defines{phantom subcategory} of $\Dcat^b(X)$ is a nonzero admissible subcategory $\calP$ satisfying
\[ K_0(\calP)=0,\quad \HHomolo_q(\calP)=0\quad(\forall q\in\ZZ). \]
Here, $K_0(\calP)$ is the Grothendieck group and $\HHomolo_q(\calP)$ is the Hochschild homology of $\calP$.
\end{definition}

The Grothendieck group $K_0(\calP)$ and the Hochschild homology $\HHomolo_q(\calP)$ both are additive under semiorthogonal decompositions.
An exceptional object contributes $\ZZ$ to the Grothendieck group and $\CC$, concentrated in degree zero, to Hochschild homology.
In particular, the two vanishing conditions in the definition can be checked once an exceptional collection of the appropriate length is known to be non-full.

\subsection{The conjecture of Kemboi et al.}

For a smooth projective surface $S$ and an integer $r\>= 1$, let
\[ \Conf_r(S) = \{(p_1,\dots,p_r)\in S^r : p_i\neq p_j \text{ for } i\neq j\}. \]
This is called the \defines{configuration space} of ordered $r$-tuples of distinct points of $S$.
A property is said to \defines{hold for general points} if it holds on a nonempty Zariski open subset of $\Conf_r(S)$.
The following {is the existence part of a conjecture in} \cite{KKLetc2025}.

\begin{conjecture}[{\cite[Conjecture 4.11]{KKLetc2025}}]
\label{conj:kemboi}
For every integer \(n\>= 0\), let $d(n)=6+\max\{3,n\}$.
Then for general points $\bfp=(p_1,\dots,p_{d(n)})\in\Conf_{d(n)}(\FF_n)$, the derived category
$\Dcat^b(\Bl_{\bfp}\FF_n)$ contains a phantom subcategory.
\end{conjecture}

The cases $n \in \{0,1\}$ follow from Krah's construction on the blow-up of $\PP^2$ at ten general points \cite{Krah2024}, and the case $n=2$ was proved in \cite{KKLetc2025}. The main result of this paper is the following theorem, which proves the {existence part of the} conjecture for all $n\>= 3$.

\begin{theorem} \label{thm:main-intro}
For every integer $n\>= 3$, there exists a nonempty Zariski open subset $U_n\subseteq \Conf_{n+6}(\FF_n)$
such that for every $\bfp\in U_n$, the derived category $\Dcat^b(\Bl_{\bfp}\FF_n)$ contains a phantom subcategory.
\end{theorem}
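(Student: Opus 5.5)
The proof has two stages, as sketched in the introduction: a base case $n=3$ with nine points, and a propagation step from $\FF_3$ to $\FF_n$ for $n>3$.

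\textbf{Base case $n=3$.} Let $X=\Bl_{\bfp}\FF_3$ with $\bfp\in\Conf_9(\FF_3)$. Then $\Picagroup(X)\cong\ZZ^{11}$ with basis $h,f,E_1,\dots,E_9$, where $h$ is the class of the negative section ($h^2=-3$) and $f$ is the fibre class. We have $\chi(\calO_X)=1$ and $K_0(X)\cong\ZZ^{13}$, so any exceptional collection of length $13$ whose orthogonal is nonzero gives a phantom. By the additivity remark after the definition of phantom subcategories, both $K_0$ and $\HHomolo_\bullet$ of the orthogonal then vanish. Hodge numbers give $\HHomolo_\bullet(X)\cong\CC^{13}$ in degree $0$, and $K_0$ is torsion-free of rank $13$ for a rational surface.

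To build the collection, I would take a standard collection of thirteen line bundles adapted to the blow-up. Then I would apply an isometry (an involution) of the lattice $\Picagroup(X)$ that fixes $K_X$, so that $\chi(L_j-L_i)=0$ is preserved for $i<j$. This is the role of Lemma~\ref{lem:isometry}.

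Exceptionality reduces to showing $H^\bullet(X,L_i-L_j)=0$ for $i<j$. Since $\chi=0$ is automatic, it suffices to check that $h^0$ and $h^2$ vanish. For general points, $h^2$ is handled by Serre duality plus $h^0$ vanishing. The $h^0$ vanishing is an interpolation statement: the expected number of conditions is at least $h^0$ on $\FF_3$. It holds on an open set once verified at one explicit configuration, for example by a computer or linear algebra check, which is Proposition~\ref{prop:computation}. Upper semicontinuity then gives a nonempty open $U_3$.

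Non-fullness comes from Kuznetsov's pseudoheight criterion \cite[Corollary 6.2]{Kuz2015}: if the pseudoheight of the collection is at least $2$, the orthogonal has nonzero $\HHomolo_2$-type obstruction, so it is nonzero. Theorem~\ref{thm:base} packages this into the statement that the orthogonal is a phantom.

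\textbf{Propagation to $n>3$.} Take $\bfp\in\Conf_{n+6}(\FF_n)$ general, and use $n-3$ of the points for elementary transformations. Blowing up a point $p$ not on the negative section and contracting the strict transform of its fibre gives $\FF_{n-1}$. Iterating, $\Bl_{p_1,\dots,p_{n-3}}\FF_n$ is isomorphic to $\Bl_{q_1,\dots,q_{n-3}}\FF_3$ for suitable points $q_i$. This is Lemma~\ref{lem:contraction}, so $\Bl_{\bfp}\FF_n\cong\Bl_{\bfq}\Bl_{\bfp'}\FF_3$, with $\bfp'$ the images of the remaining nine points.

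The key point is genericity. The map from an open subset of $\Conf_{n+6}(\FF_n)$ to configurations on $\FF_3$, recording the images of the nine remaining points, should be dominant. For fixed $p_1,\dots,p_{n-3}$, the elementary transformations are birational isomorphisms defined off finitely many fibres, so the nine points move in an open subset of $\FF_3^9$. The preimage of $U_3$ is therefore a nonempty open set $U_n$.

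By Orlov's blow-up formula \cite{O1992}, $\Dcat^b(\Bl_{\bfp}\FF_n)=\langle \calO_{E'_1},\dots,\calO_{E'_{n-3}},\pi^*\Dcat^b(\Bl_{\bfp'}\FF_3)\rangle$. The functor $\pi^*$ is fully faithful, so it sends the phantom to an admissible subcategory. That subcategory is nonzero and equivalent to the phantom, hence itself a phantom, since $K_0$ and $\HHomolo$ are invariants of the category.

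\textbf{Main obstacle.} I expect the main obstacle to be the base case: finding the right involution and verifying the many cohomology vanishings by interpolation. A secondary but routine point is the dominance check in the propagation step.
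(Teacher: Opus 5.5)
Your outline follows the paper's proof: Theorem~\ref{thm:base} for $n=3$, the elementary transformations of Lemma~\ref{lem:contraction}, then Orlov's formula and Lemma~\ref{lem:phantom-pullback}. The gap is in the genericity step. You only assert it (``should be dominant''), and your slice argument does not by itself give a Zariski open subset of $\Conf_{n+6}(\FF_n)$.

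For fixed $p_1,\dots,p_{n-3}$ you correctly show that the nine remaining points sweep out an open subset of $\FF_3^9$. But the copy of $\FF_3$ you land in is built from $p_1,\dots,p_{n-3}$. A map to $\Conf_9(\FF_3)$ exists only after you identify it with one fixed $\FF_3$, algebraically in $p_1,\dots,p_{n-3}$. Otherwise you only know that the good locus meets each slice in a nonempty open set, and a union of slice-wise open sets need not be Zariski open.

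Lemma~\ref{lem:configuration-map} supplies exactly this. Put $h(t)=\prod_{i\le k}(t-t_i)$ and let $g$ be the Lagrange interpolant of $z_1,\dots,z_k$ at $t_1,\dots,t_k$. Then $\deg g<k<n$, so $z\mapsto z-g(t)$ is an automorphism of $\FF_n$ fixing $C_n$. The composite of the elementary transformations is $(t,z)\mapsto\bigl(t,(z-g(t))/h(t)\bigr)$. This gives a morphism $\Psi_n\colon T_{n+6}\to T_9$, regular in all $n+6$ points at once. It is surjective because one can solve $z_{k+j}=g(s_j)+h(s_j)w_j$, and then $U_n=\Psi_n^{-1}(U_3)$.

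Alternatively, impose vanishing of $\Homolo^\bullet$ for the classes $\pi^*(A_i-A_j)$, written via \eqref{eq:pullback-classes}, on the universal blow-up over $T_{n+6}$. Semicontinuity gives openness, and your slice argument gives nonemptiness. One of these has to be supplied.

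There are also three smaller errors.
\begin{itemize}
\item Kuznetsov's criterion concerns Hochschild \emph{cohomology}. The map $\mathrm{HH}^k(X)\to\mathrm{HH}^k(\calE^\perp)$ is an isomorphism for $k\le h(\calE)-2$, and $h(\calE)\ge\ph(\calE)\ge2$ gives $\mathrm{HH}^0(\calE^\perp)\cong\mathrm{HH}^0(X)\neq0$, so $\calE^\perp\neq0$. There is no nonzero Hochschild \emph{homology} in $\calE^\perp$: its vanishing in every degree is exactly the phantom property.
\item Let $Y$ be the nine-point blow-up of $\FF_3$. With the paper's convention, in $\langle\calO_{E'_1},\dots,\calO_{E'_{n-3}},L\pi^*\Dcat^b(Y)\rangle$ the torsion objects must be $\calO_{E'_i}(-1)$. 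Indeed $\Hom(L\pi^*A,\calO_{E'_i}[r])=\Hom(A,R\pi_*\calO_{E'_i}[r])$, and $R\pi_*\calO_{E'_i}$ is a skyscraper sheaf, whereas $R\pi_*\calO_{E'_i}(-1)=0$.
\item Full faithfulness of $L\pi^*$ alone does not make $L\pi^*(\calP_3)$ admissible. You also need that $L\pi^*\Dcat^b(Y)$ is admissible (it is a component of Orlov's decomposition) and that a composite of admissible embeddings is admissible, as in Lemma~\ref{lem:phantom-pullback}.
\end{itemize}
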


Together with the known cases $n=0,1,2$, Theorem \ref{thm:main-intro} establishes {the existence part of} Conjecture \ref{conj:kemboi} for every nonnegative integer $n$.

\section{Some categorical and numerical tools}\label{sec:tools}

From now on all surfaces are smooth and projective over $\CC$, and all derived subcategories are understood to be closed under direct summands.
We use the standard enhancements when taking Hochschild homology.
For $n\>= 0$, let
\begin{itemize}
  \item $C_n$ be the section of self-intersection $-n$ on $\FF_n$ (for $n=0$, fix either ruling);
  \item $F$ be a fiber class.
\end{itemize}
On $X=\Bl_{p_1,\ldots,p_r}\FF_n$, with points away from $C_n$,
we use the same letters for the pullback classes and write $E_i$ for the exceptional curves. The intersection numbers and canonical class are
\begin{gather}
 C_n^2=-n,\quad C_n\cdot F=1,\quad F^2=0,\quad
 E_i\cdot E_j=-\delta_{ij},\quad C_n\cdot E_i=F\cdot E_i=0,
 \label{eq:intersection-general}\\
 K_X := -2C_n-(n+2)F+\sum_{i=1}^r E_i.
 \label{eq:canonical-general}
\end{gather}
The class $F$ is numerically effective.

For a coherent sheaf $\calF$ on $X$, let $\Homolo^i(X,\calF)$ denote its $i$-th cohomology group, and write $h^i(X,\calF):=\dim_{\CC}\Homolo^i(X,\calF)$. In particular, $\Homolo^0(X,\calF)$ is the space of global sections of $\calF$.
Then $B\cdot F<0$ implies $\Homolo^0(X,\calO_X(B))=0$. Since $X$ is rational, Riemann--Roch and
Serre duality show that
\begin{equation}\label{eq:RR-general}
 \chi(X,\calO_X(B))=1+\frac{1}{2} B\cdot(B-K_X), \text{ and }
 h^2(X,\calO_X(B))=h^0(X,\calO_X(K_X-B)).
\end{equation}
See \cite[Chapter V, Section 2]{Hart1977} for ruled surfaces.

We shall also use Orlov's blow-up formula \cite{O1992}.
If $\pi:X\to Y$ is the blow-up of a smooth projective surface at $r$ distinct points,
with exceptional curves $\Gamma_i$, then $L\pi^*$ is fully faithful and
\begin{equation}\label{eq:orlov}
\Dcat^b(X)=
  \bigl\langle
    \calO_{\Gamma_1}(-1),\ldots,\calO_{\Gamma_r}(-1), L\pi^*\Dcat^b(Y)
  \bigr\rangle.
\end{equation}
All displayed components are admissible.

\begin{lemma}\label{lem:additive-invariants}
If $X$ is the blow-up of $\FF_n$ at $r$ distinct points, then
\[ K_0(X)\cong\ZZ^{r+4},\quad  \HHomolo_q(X)=0\ (q\ne0), \quad \dim_\CC\HHomolo_0(X)=r+4. \]
\end{lemma}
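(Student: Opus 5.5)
We will build a full exceptional collection on $X$ and then use the additivity of $K_0$ and $\HHomolo_\bullet$ under semiorthogonal decompositions, recalled after the definition of phantom subcategories. The construction goes by induction on $r$, using Orlov's blow-up formula \eqref{eq:orlov}.

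\textbf{The base case $r=0$.} Here $X=\FF_n=\bfP_{\PP^1}(\calO\oplus\calO(n))$ with projection $p:\FF_n\to\PP^1$. By Orlov's projective bundle formula \cite{O1992} we have
\[ \Dcat^b(\FF_n)=\bigl\langle p^*\Dcat^b(\PP^1),\ p^*\Dcat^b(\PP^1)\otimes\calO_{\FF_n}(1)\bigr\rangle . \]
By Beilinson's theorem, $\Dcat^b(\PP^1)=\langle\calO,\calO(1)\rangle$. Combining these gives a full exceptional collection of length $4$ on $\FF_n$, for example
\[ \bigl(\calO,\ \calO(F),\ \calO(C_n+nF),\ \calO(C_n+(n+1)F)\bigr). \]

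\textbf{The inductive step.} For $r$ distinct points, \eqref{eq:orlov} together with full faithfulness of $L\pi^*$ gives
\[ \Dcat^b(X)=\bigl\langle \calO_{\Gamma_1}(-1),\ldots,\calO_{\Gamma_r}(-1),\,L\pi^*\Dcat^b(\FF_n)\bigr\rangle . \]
Each $\calO_{\Gamma_i}(-1)$ is exceptional. Since $L\pi^*$ is fully faithful, the $L\pi^*$-images of the four line bundles above form a full exceptional collection of $L\pi^*\Dcat^b(\FF_n)$. Hence $\Dcat^b(X)$ has a full exceptional collection of length $r+4$.

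\textbf{Computing the invariants.} Each exceptional object $E$ generates an admissible subcategory $\langle E\rangle\simeq\Dcat^b(\CC)$. Therefore $K_0(\langle E\rangle)=\ZZ$ and $\HHomolo_\bullet(\langle E\rangle)=\CC$, concentrated in degree $0$. Additivity along the semiorthogonal decomposition into $r+4$ such pieces then gives
\[ K_0(X)\cong\ZZ^{r+4},\qquad \HHomolo_q(X)=0\ (q\neq 0),\qquad \dim_\CC\HHomolo_0(X)=r+4. \]

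\textbf{Main obstacle.} The substantive input is the additivity of Hochschild homology for admissible semiorthogonal decompositions, which requires the dg enhancements fixed at the start of Section~\ref{sec:tools}. For this we cite Kuznetsov's additivity theorem and do not reprove it.

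As a consistency check, the Hochschild--Kostant--Rosenberg theorem gives $\HHomolo_q(X)=\bigoplus_{p}\Homolo^{p}(X,\Omega^{p+q})$. For a rational surface, $h^{p,q}=0$ for $p\ne q$, and
\[ h^{0,0}+h^{1,1}+h^{2,2}=1+(r+2)+1=r+4, \]
using $\mathrm{rk}\,\Picagroup(X)=2+r$. This matches the count above.
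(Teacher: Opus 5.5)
Your proof is correct. For $K_0$ it is the paper's argument: the projective bundle formula for $\FF_n$, then one copy of $\ZZ$ per point from \eqref{eq:orlov}.

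For Hochschild homology you take a different route. The paper computes $\HHomolo_\bullet(X)$ directly by Hochschild--Kostant--Rosenberg from the Hodge diamond of a rational surface ($h^{p,q}=0$ for $p\neq q$, $h^{1,1}=r+2$). You instead build a full exceptional collection of length $r+4$, invoke Kuznetsov's additivity of Hochschild homology, and keep HKR only as a consistency check. Your version gets both invariants from one structural fact. It uses only the additivity that the paper needs anyway in the proof of Theorem~\ref{thm:base}. The paper's version avoids the additivity theorem at this step and needs only classical Hodge theory of rational surfaces.

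Two small remarks on your details:
\begin{enumerate}
\item Your explicit collection $(\calO,\calO(F),\calO(C_n+nF),\calO(C_n+(n+1)F))$ is indeed exceptional. The relevant differences are $-F$ and $-C_n-mF$ with $m\in\{n-1,n,n+1\}$, and all have vanishing cohomology, since $\calO(-C_n-mF)$ restricts to $\calO(-1)$ on every fiber. Its fullness does not depend on the sign convention for $\bfP$, because any line bundle of relative degree one generates the same component $p^*\Dcat^b(\PP^1)\otimes\calO(1)$.
\item The word ``induction'' is unnecessary, since \eqref{eq:orlov} already handles all $r$ points at once.
\end{enumerate}
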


\begin{proof}
The projective bundle formula gives $K_0(\FF_n)\cong\ZZ^4$.
The blow-up formula adds one copy of $\ZZ$ for each point.
The Hodge numbers of $X$ vanish off the diagonal,
while $h^{0,0}(X)=h^{2,2}(X)=1$ and $h^{1,1}(X)=r+2$.
Hochschild--Kostant--Rosenberg gives the asserted Hochschild homology.
\end{proof}

For objects $E,F\in\Dcat^b(X)$, put
\[ e(E,F) = \inf\{q\in\ZZ: \Ext^q(E,F) \ne 0\}, \]
with value $+\infty$ if all these groups vanish. For an exceptional collection of line bundles $\calE=(L_0,\ldots,L_s)$ on a smooth projective surface, its anticanonical pseudoheight is
\begin{equation}\label{eq:anti-pseu}
 \phac(\calE)=
 \min_{0\=< i_0<\cdots<i_p\=< s}
 \left\{\sum_{a=0}^{p-1}e(L_{i_a},L_{i_{a+1}})
 +e(L_{i_p},L_{i_0}\otimes\calO_X(-K_X))-p\right\}.
\end{equation}
The minimum includes $p=0$. By~\cite[Definition 4.9]{Kuz2015}, the pseudoheight is $\ph(\calE)=\phac(\calE)+2$.

\begin{lemma}\label{lem:nonfull-criterion}
Let $\calE=(L_0,\ldots,L_s)$ be an exceptional collection of line
bundles on a smooth projective surface. If $\Hom(L_i,L_j)=0$ for all
$i<j$, then $\ph(\calE)\>=2$. In particular, $\calE$ is not full.
\end{lemma}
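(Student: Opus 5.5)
The plan is to bound every term in the minimum defining $\phac(\calE)$ from below by $0$, which gives $\ph(\calE)=\phac(\calE)+2\>=2$. Then apply Kuznetsov's criterion \cite[Corollary 6.2]{Kuz2015}: a full exceptional collection on a smooth projective variety has pseudoheight at most $1$, or, equivalently, $\ph(\calE)\>=2$ forces $\HHomolo$ of the orthogonal to be nonzero in some degree. In particular the orthogonal is nonzero.

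The first step is to control the relative terms $e(L_{i_a},L_{i_{a+1}})$ for $i_a<i_{a+1}$. Here $\Ext^q(L_i,L_j)=\Homolo^q(X,L_j\otimes L_i^{-1})$ vanishes for $q<0$ and $q>2$. The hypothesis $\Hom(L_i,L_j)=0$ kills $q=0$. Hence $e(L_i,L_j)\>=1$, with the value $+\infty$ allowed. Summing over the $p$ consecutive pairs of a chain gives $\sum_{a=0}^{p-1}e(L_{i_a},L_{i_{a+1}})\>=p$, so these terms absorb the $-p$ in \eqref{eq:anti-pseu}.

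The second step is the closing term $e(L_{i_p},L_{i_0}\otimes\calO_X(-K_X))$, which I would show is at least $0$. It is an Ext between sheaves, so it is $\>=0$ trivially. This is already enough: each bracket in \eqref{eq:anti-pseu} is $\>= p+0-p=0$.

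For $p=0$ the bracket is $e(L_{i_0},L_{i_0}(-K_X))=e(\calO,\calO(-K_X))\>=0$, again trivially. Taking the minimum over all chains gives $\phac(\calE)\>=0$, so $\ph(\calE)\>=2$.

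The only real point is matching conventions with Kuznetsov's statement. One must check that his pseudoheight in \cite[Definition 4.9]{Kuz2015} uses the same ordering of the collection and the same twist by the anticanonical (Serre functor $S^{-1}=-\otimes\omega^{-1}[-2]$). One must also check that the shift by $\dim X=2$ is exactly what converts $\phac$ to $\ph$, so that \cite[Corollary 6.2]{Kuz2015} reads: if $\ph(\calE)>1$ then $\HHomolo_{\bullet}(\calE^\perp)\ne0$, hence $\calE^\perp\ne0$ and $\calE$ is not full. I would verify the direction of the Ext in the cyclic term against his definition; if his convention reverses it, the same argument applies verbatim because all Ext groups between sheaves sit in degrees $\>=0$.
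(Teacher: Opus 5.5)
Your bounding argument is the same as the paper's proof. Each of the $p$ chain terms is at least $1$ because $\Hom(L_i,L_j)=0$ and negative Exts between sheaves vanish. The closing term is at least $0$. Hence $\phac(\calE)\geq0$ and $\ph(\calE)\geq2$, and non-fullness is then quoted from \cite[Corollary 6.2]{Kuz2015}. That part is correct as written.

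The error is in how you describe what that citation says. In your last paragraph you read it as: $\ph(\calE)>1$ implies $\HHomolo_\bullet(\calE^\perp)\neq0$, i.e.\ nonvanishing Hochschild \emph{homology}. That statement is false, and this paper is a counterexample to it. Theorem~\ref{thm:base} produces a collection with $\ph(\calE)\geq2$ whose orthogonal satisfies $\HHomolo_q(\calP_3)=0$ for every $q$; that vanishing is exactly what makes $\calP_3$ a phantom. Kuznetsov's result is about Hochschild \emph{cohomology}. Pseudoheight is a lower bound for height, and the orthogonal of a collection of height $h$ has the same Hochschild cohomology as $X$ in degrees $\leq h-2$. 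So $\ph(\calE)\geq2$ gives $\mathrm{HH}^0(\calE^\perp)\cong\mathrm{HH}^0(X)=\CC\neq0$, and therefore $\calE^\perp\neq0$. Your other paraphrase, that a full collection has $\ph\leq1$, is correct. Once the homology claim is replaced by this cohomological statement, your proof coincides with the paper's.
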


\begin{proof}
The first $p$ terms in \eqref{eq:anti-pseu} are at least one. The last term is nonnegative because negative Ext groups
between line bundles vanish.
Thus, $\phac(\calE)\>=0$ and $\ph(\calE)\>=2$.
The last assertion follows from \cite[Corollary 6.2]{Kuz2015}.
\end{proof}

\section{A phantom on the blow-up of \texorpdfstring{$\FF_3$}{F3} at nine points}
\label{sec:base}

Let $X=\Bl_{p_1,\ldots,p_9}\FF_3$, where the points lie away from the negative section and on distinct fibers. Write $C=C_3$ and $K=K_X$.
Thus, we have
\begin{equation}\label{eq:base-canonical}
 K=-2C-5F+\sum_{i=1}^9E_i,\qquad K^2=-1,\qquad K\cdot C=1.
\end{equation}
Define divisor classes
\begin{align}
 D_i & = 3K+C-E_i = -5C-15F+3\sum_{j=1}^9 E_j-E_i  \quad (1\=< i\=<9),\label{eq:Di}\\
   G & = 5K+C-F   = -9C-26F+5\sum_{j=1}^9 E_j. \label{eq:G}
\end{align}
We consider the ordered collection
\begin{equation}\label{eq:collection}
 \calE = (\calO_X,\calO_X(D_1),\ldots,\calO_X(D_9),
            \calO_X(G),\calO_X(C+3G),\calO_X(C+4G)).
\end{equation}
Write $A_0,\ldots,A_{12}$ for the corresponding divisor classes and
put $L_i=\calO_X(A_i)$.

\begin{theorem}\label{thm:base}
There is a nonempty Zariski open subset $U_3\subset\Conf_9(\FF_3)$ such that, for every configuration in $U_3$,
the collection~\eqref{eq:collection} is exceptional and not full.
Its right orthogonal $\calP_3=\calE^\perp$ is a phantom subcategory of $\Dcat^b(X)$.
\end{theorem}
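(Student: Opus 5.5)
The plan has three parts. First I establish exceptionality and the vanishing of forward $\Hom$'s by cohomology computations on $X$. Then I obtain non-fullness from Lemma~\ref{lem:nonfull-criterion}. Finally I deduce the phantom property by additivity together with Lemma~\ref{lem:additive-invariants}.

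Every line bundle on the rational surface $X$ is exceptional. So $\calE$ is an exceptional collection exactly when $\Homolo^\bullet(X,\calO_X(A_i-A_j))=0$ for all $i<j$. Lemma~\ref{lem:nonfull-criterion} additionally needs $\Homolo^0(X,\calO_X(A_j-A_i))=0$ for $i<j$.

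\textbf{Numerical step.} For each difference $B=A_i-A_j$ ($i<j$), I first check $\chi(\calO_X(B))=0$ using \eqref{eq:RR-general} together with $K^2=-1$, $K\cdot C=1$, $C^2=-3$, $C\cdot E_i=0$ and $K\cdot E_i=-1$. The typical differences are $-D_i$, $D_i-D_j=E_j-E_i$, $-G$, $D_i-G$, $-C-3G$, $-2G-C$, $-G$ and the like. Each should be of the form $-(mK+\dots)$ with $\chi=0$, i.e.\ $B^2-B\cdot K=-2$. The choice of classes via the Picard-lattice involution (Lemma~\ref{lem:isometry}) is presumably designed exactly so that each difference is the image of a $(-1)$-type class, for example $E_j-E_i$. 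I would exploit this by reducing all the $\chi$ computations to the standard ones $E_j-E_i$ and $-E_i$.

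\textbf{Vanishing step.} Given $\chi=0$, it suffices to prove $h^0(B)=h^2(B)=0$, i.e.\ $h^0(B)=h^0(K-B)=0$. Many of these follow immediately by intersecting with the nef class $F$: $D_i\cdot F=-5$, $G\cdot F=-9$, $C\cdot F=1$. So any class with negative $F$-degree has no sections. The remaining groups, those with nonnegative $F$-degree, are where genericity of the points enters. There the task is to show that a linear system $|aC+bF-\sum m_jE_j|$ on $\FF_3$ is empty for general points, i.e.\ that the expected dimension (computed by Riemann--Roch on $\FF_3$) is negative and the interpolation conditions are independent. I would verify each case by exhibiting a single explicit configuration (Proposition~\ref{prop:computation}), for instance points in coordinates on $\FF_3$ with exact rational arithmetic, for which the relevant $h^0$ vanishes. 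Upper semicontinuity of $h^0$ over $\Conf_9(\FF_3)$ then gives a nonempty open set. Intersecting the finitely many open sets yields $U_3$. This is the main obstacle: the $h^2$ terms $h^0(K-B)$ and the backward $\Hom$'s $h^0(A_j-A_i)$ can have positive $F$-degree, and special configurations (e.g.\ points on a curve in $|C+3F|$ or on conics) would break them. So genuine generic-interpolation input is needed rather than a pure numerical argument. One must also check that $U_3$ is compatible with the standing assumption that the points avoid $C$ and lie on distinct fibers, which is itself an open condition.

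\textbf{Conclusion.} With forward $\Hom$ vanishing, Lemma~\ref{lem:nonfull-criterion} shows $\calE$ is not full, so $\calP_3=\calE^\perp$ is a nonzero admissible subcategory in $\Dcat^b(X)=\langle\calP_3,L_0,\dots,L_{12}\rangle$. Additivity gives $K_0(X)\cong K_0(\calP_3)\oplus\ZZ^{13}$ and $\HHomolo_\bullet(X)\cong\HHomolo_\bullet(\calP_3)\oplus\CC^{13}$ in degree $0$. By Lemma~\ref{lem:additive-invariants} with $r=9$, we have $K_0(X)\cong\ZZ^{13}$ and $\HHomolo_\bullet(X)=\CC^{13}$ in degree $0$. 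Hence $\HHomolo_\bullet(\calP_3)=0$, and $K_0(\calP_3)$ is a direct summand complement of $\ZZ^{13}$ in $\ZZ^{13}$ that is finitely generated; the rank comparison forces it to be torsion, and a direct summand of a free group is free, so $K_0(\calP_3)=0$. Thus $\calP_3$ is a phantom.
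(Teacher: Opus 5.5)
Your plan is essentially the paper's proof. It uses $\chi=0$ via the involution $J$, vanishing of negative $F$-degree classes by nefness of $F$, interpolation at one explicit configuration for the rest (you use semicontinuity where the paper notes that the forty determinants are nonzero integer polynomials in the coordinates), and then Lemma~\ref{lem:nonfull-criterion} together with additivity of $K_0$ and $\HHomolo_\bullet$.

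One correction concerns your ``main obstacle'', which is misplaced. The $C$-coefficients $0,-5,\ldots,-5,-9,-26,-35$ of $A_0,\ldots,A_{12}$ are non-increasing. So for $B=A_i-A_j$ with $i<j$ one has $(K-B)\cdot F\le -2$ and $(A_j-A_i)\cdot F\le 0$, with equality only when $A_j-A_i=E_i-E_j$. Hence $h^2(X,\calO_X(B))$ and $\Hom(L_i,L_j)$ vanish for \emph{every} configuration, and genericity enters only through $\Homolo^0(X,\calO_X(A_i-A_j))$ for the forty fat-point classes of Table~\ref{tab:classes}. Likewise, the $J$-preimages of the differences include $-F$, $-C-2F$, $E_i-C-4F$, and so on, not only $E_j-E_i$ and $-E_i$.
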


The proof occupies the remainder of this section.
We first construct the divisor classes by an isometry and verify numerical exceptionality.
We then prove the necessary vanishing by interpolation.
Finally, we apply Lemma \ref{lem:nonfull-criterion} and compute the additive invariants of the orthogonal.

\subsection{An isometry fixing the negative section}

\begin{lemma}\label{lem:isometry}
The map
\begin{equation}\label{eq:isometry}
 J(B)=-B-\bigl(3B\cdot K+B\cdot C\bigr)K -\bigl(B\cdot K+B\cdot C\bigr)C
\end{equation}
is an integral isometry of $\Picagroup(X)$. It satisfies $J^2=1$, $J(K)=K$, $J(C)=C$, $J(E_i)=D_i$, and $J(F)=G$.
\end{lemma}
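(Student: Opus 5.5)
The plan is to recognize $J$ as the map that is the identity on the rank-two sublattice $M=\ZZ K\oplus\ZZ C$ and $-1$ on its orthogonal complement. From that description, the isometry property and $J^2=1$ follow formally, and the four listed values come from direct substitution using \eqref{eq:intersection-general} and \eqref{eq:base-canonical}.

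\textbf{Integrality and linearity.} The map \eqref{eq:isometry} is $\ZZ$-linear in $B$, because the intersection pairing is bilinear. Its coefficients $3B\cdot K+B\cdot C$ and $B\cdot K+B\cdot C$ are integers for integral $B$. Hence $J$ is an endomorphism of $\Picagroup(X)$.

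\textbf{Fixed classes.} Using $K^2=-1$, $K\cdot C=1$ and $C^2=-3$, substitute $B=K$ and $B=C$:
\[ J(K)=-K-(-3+1)K-(-1+1)C=K,\qquad J(C)=-C-(3-3)K-(1-3)C=C. \]
For $B$ orthogonal to both $K$ and $C$, both coefficients vanish, so $J(B)=-B$.

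\textbf{Isometry and $J^2=1$.} The Gram matrix of $M$ is $\begin{pmatrix}-1&1\\1&-3\end{pmatrix}$, which has determinant $2\neq0$. So $M$ is nondegenerate, and over $\mathbb{Q}$ we have the orthogonal splitting $\Picagroup(X)\otimes\mathbb{Q}=M_{\mathbb{Q}}\oplus M_{\mathbb{Q}}^{\perp}$. On this splitting $J$ acts as $\mathrm{id}\oplus(-\mathrm{id})$. Both summands are preserved and the sign change on an orthogonal summand preserves the form, so $J$ preserves the intersection form and squares to the identity on the rational lattice, hence on $\Picagroup(X)$. Since $J^2=1$, $J$ is its own inverse, so it is bijective on $\Picagroup(X)$ and is therefore an integral isometry.

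\textbf{The values on $E_i$ and $F$.}
\begin{itemize}
  \item For $E_i$: $E_i\cdot K=-1$ and $E_i\cdot C=0$ (points away from $C$), so $J(E_i)=-E_i+3K+C=D_i$, as in \eqref{eq:Di}.
  \item For $F$: $F\cdot K=-2C\cdot F=-2$ and $F\cdot C=1$, so $J(F)=-F+5K+C=G$, matching \eqref{eq:G}.
\end{itemize}
The explicit expansions in \eqref{eq:Di} and \eqref{eq:G} then follow by inserting \eqref{eq:base-canonical}.

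\textbf{Main obstacle.} There is no real obstacle. The only point needing care is that the splitting $M\oplus M^\perp$ exists only rationally, not integrally, since the discriminant of $M$ is $2$. The argument avoids this by defining $J$ by the integral formula and using the splitting only to verify the isometry and involution identities. If one prefers a direct check, expanding $J(B)\cdot J(B')$ bilinearly with the Gram matrix of $M$ and cancelling terms confirms $J(B)\cdot J(B')=B\cdot B'$.
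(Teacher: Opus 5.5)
Your proposal is correct and follows essentially the same route as the paper. Both arguments treat $J$ as the involution that is the identity on the rational span of $K$ and $C$ (nondegenerate, with Gram determinant $2$) and $-1$ on its orthogonal complement, get integrality from the integer coefficients, and obtain $J(E_i)=D_i$ and $J(F)=G$ by direct substitution. The only difference is the direction of the computation: the paper defines the involution abstractly and solves for the coefficients $\alpha,\beta$, whereas you start from the formula and check that it fixes $K$ and $C$ and negates the orthogonal complement.
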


\begin{proof}
The intersection matrix on the rational span $W$ of $K$ and $C$ is
\[\begin{pmatrix}
  -1 & 1\\  1 & -3
\end{pmatrix}.\]
Its determinant is two. Let $J$ act as the identity on $W$ and as minus the identity on $W^\perp$. Then $J$ is an involutive isometry of $\Picagroup(X)\otimes\mathbb Q$.

Write $J(B)=-B+\alpha K+\beta C$. Taking the intersection product with $K$ and $C$, we obtain
\[ -\alpha+\beta=2B\cdot K, \text{ and }  \alpha-3\beta=2B\cdot C. \]
Solving these equations, we obtain
\[ \alpha=-3B\cdot K-B\cdot C, \text{ and } \beta=-B\cdot K-B\cdot C. \]
This proves \eqref{eq:isometry}. Both coefficients are integers for $B\in\Picagroup(X)$,
so $J$ preserves the integral lattice. Since $J^2=1$, its restriction is an integral isometry.
Finally, direct substitution shows that $J(E_i)=D_i$ and $J(F)=G$.
\end{proof}

The list $(A_0,\ldots,A_{12})$ is the image under $J$ of
\begin{equation}\label{eq:initial-list}
 (0,E_1,\ldots,E_9,F,C+3F,C+4F).
\end{equation}
We use $J$ only as a lattice isometry; no derived autoequivalence is asserted.
In particular, cohomology vanishing must be checked separately.

\begin{lemma}\label{lem:numerical}
  For $0\=< i<j\=<12$, one has $\chi(X,\calO_X(A_i-A_j))=0$.
\end{lemma}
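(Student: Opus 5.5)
The plan is to reduce the statement to the initial list \eqref{eq:initial-list} using Lemma~\ref{lem:isometry}, and then check that list by hand. By \eqref{eq:RR-general},
\[\chi(X,\calO_X(B))=1+\tfrac12\bigl(B^2-B\cdot K\bigr),\]
so $\chi$ depends only on the two numbers $B^2$ and $B\cdot K$. The map $J$ is linear, is an isometry, and satisfies $J(K)=K$. Hence $J(B)^2=B^2$ and $J(B)\cdot K=J(B)\cdot J(K)=B\cdot K$. It follows that $\chi(X,\calO_X(J(B)))=\chi(X,\calO_X(B))$ for every class $B$.

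Now write $(B_0,\ldots,B_{12})=(0,E_1,\ldots,E_9,F,C+3F,C+4F)$. By linearity $A_i-A_j=J(B_i-B_j)$. So it suffices to show $\chi(X,\calO_X(B_i-B_j))=0$ for all $i<j$.

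For this I use \eqref{eq:intersection-general} and \eqref{eq:base-canonical} with $n=3$. These give $K\cdot E_i=-1$, $K\cdot F=-2$, $K\cdot C=1$, $C^2=-3$, $C\cdot F=1$ and $F^2=0$. I then run through the differences $B_i-B_j$ by type:
\begin{itemize}
\item $-E_i$: $B^2=-1$ and $B\cdot K=1$.
\item $E_i-E_j$: $B^2=-2$ and $B\cdot K=0$.
\item $-F$: $B^2=0$ and $B\cdot K=2$. This also covers $(C+3F)-(C+4F)$.
\item $E_i-F$: $B^2=-1$ and $B\cdot K=1$.
\item $-(C+aF)$ with $a\in\{3,4\}$: $B^2=2a-3$ and $B\cdot K=2a-1$.
\item $E_i-(C+aF)$: $B^2=2a-4$ and $B\cdot K=2a-2$.
\item $F-(C+aF)$: $B^2=2a-5$ and $B\cdot K=2a-3$.
\end{itemize}
In every case $B^2-B\cdot K=-2$, so $\chi=0$.

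There is no real obstacle here. The only step needing care is the sign bookkeeping, in particular $E_i\cdot K=E_i^2=-1$ and $F\cdot K=-2C\cdot F=-2$. A cross-check is available: each $B_i$ satisfies $B_i^2+B_i\cdot K=-2$ for $i\geq 1$, or is $0$, which makes the uniform value $-2$ plausible.
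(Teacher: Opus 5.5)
Your proposal is correct and follows essentially the paper's proof. You reduce via the $K$-fixing isometry $J$ to the differences in \eqref{eq:initial-list}, then check each type directly in terms of $B^2$ and $B\cdot K$ (the paper uses the coordinate formula \eqref{eq:RR-base} instead), and all your intersection numbers are right.
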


\begin{proof}
For $B=aC+bF+\sum_i c_iE_i$, formula~\eqref{eq:RR-general} becomes
\begin{equation}\label{eq:RR-base}
 \chi(X,\calO_X(B)) = 1+\frac{-3a^2+2ab-\sum_i c_i^2-a+2b+\sum_i c_i}{2}.
\end{equation}
Up to the indices on the exceptional classes, the differences of an earlier term and a later term in \eqref{eq:initial-list} are
\begin{align*}
 &-E_i,   &&-F,       && -C-3F,    && -C-4F, && E_i-E_j,\\
 & E_i-F, && E_i-C-3F,&& E_i-C-4F, && -C-2F. &&
\end{align*}
Each has Euler characteristic zero by \eqref{eq:RR-base}. The map $J$ preserves the intersection form and $K$, so it preserves
the Euler characteristic in~\eqref{eq:RR-general}.
\end{proof}

\subsection{The interpolation conditions}

For distinct indices $i,j$, the exact sequence
\[ 0 \to \calO_X \to \calO_X(E_j) \to \calO_{E_j}(-1) \to 0 \]
shows that the only effective divisor in $|E_j|$ is $E_j$ itself.
It cannot contain $E_i$. Therefore,
\begin{equation}\label{eq:excep-diff}
 \Homolo^0(X,\calO_X(E_j-E_i))=0\quad(i\neq j).
\end{equation}
Since $D_i-D_j=E_j-E_i$, this treats $\binom{9}{2}=36$ of the $\binom{13}{2}=78$ differences $A_i-A_j$ with $i<j$.

The remaining differences are listed in Table \ref{tab:classes}.
They have the form $B=aC+bF-\sum_{j=1}^9m_jE_j$.
The notation $(u;v^8)_i$ means $m_i=u$ and $m_j=v$ for $j\neq i$,
and $(v^9)$ means that all nine multiplicities are $v$.

\begin{table}[htbp]
\centering\small
\caption{The remaining differences $A_i-A_j$, $i<j$.}
\label{tab:classes}
\begin{tabular}{@{}clrrcrr@{}}
\toprule
Type & Divisor & $a$ & $b$ & Multiplicities & $N$ & Pairs\\
\midrule
I & $D_i-G$ & 4 & 11 & $(3;2^8)_i$ & 30 & 9\\
II & $-D_i$ & 5 & 15 & $(2;3^8)_i$ & 51 & 9\\
III & $-G$ & 9 & 26 & $(5^9)$ & 135 & 2\\
IV & $-C-2G$ & 17 & 52 & $(10^9)$ & 495 & 1\\
V & $D_i-C-3G$ & 21 & 63 & $(13;12^8)_i$ & 715 & 9\\
VI & $-C-3G$ & 26 & 78 & $(15^9)$ & 1080 & 2\\
VII & $D_i-C-4G$ & 30 & 89 & $(18;17^8)_i$ & 1395 & 9\\
VIII & $-C-4G$ & 35 & 104 & $(20^9)$ & 1890 & 1\\
\bottomrule
\end{tabular}
\end{table}

Type III occurs for $(i,j)=(0,10),(11,12)$ and type VI for $(i,j)=(0,11),(10,12)$.
The other pairs are read directly from \eqref{eq:collection}.
Thus the table accounts for all remaining $42$ pairs and gives $4\cdot9+4=40$ distinct divisors with labelled
points. All nine choices of the distinguished index in types I, II, V, and VII are included in the computation below.

Let $\rho:X\to\FF_3$ be the blow-up morphism. For nonnegative $m_i$,
the projection formula gives
\begin{equation}\label{eq:fat-points}
 \Homolo^0(X,\calO_X(B))\cong
 \Homolo^0\bigg(
             \FF_3,\calO(aC+bF)\otimes
             \bigcap_{i=1}^9\mathcal I_{p_i}^{m_i}
          \bigg).
\end{equation}
Indeed, $\rho_*\calO_X(-\sum_i m_iE_i)=\bigcap_i\mathcal I_{p_i}^{m_i}$.

The complement of $C$ in $\FF_3$ is $\Tot\calO_{\PP^1}(3)$.
Choose an affine coordinate $t$ on the base and a fiber coordinate $z$.
On the other base chart, $t'=t^{-1}$ and $z'=t^{-3}z$.
For $a\>=0$, the sections of $\calO(aC+bF)$ have a basis
\begin{equation}\label{eq:monomials}
 t^rz^j,\quad 0\=< j\=< a,\quad 0\=< r\=< b-3j,
\end{equation}
where a range with negative upper bound is empty.
To see this, the degree in $z$ is at most $a$ by regularity along the section at infinity.
In a suitable local frame on the other base chart,
a section $f(t,z)$ is expressed as $t^{-b}f(t,t^3z')$. Its regularity gives $r+3j\=< b$.
Equivalently, for the ruling $f:\FF_3\to\PP^1$,
\[ f_*\calO(aC+bF)=\bigoplus_{j=0}^a\calO_{\PP^1}(b-3j). \]

At $p_i=(t_i,z_i)$, the condition of vanishing to order at least $m_i$ is the vanishing of the coefficients of $u^\alpha v^\beta$ in $f(t_i+u,z_i+v)$ for $\alpha+\beta<m_i$.
In the basis \eqref{eq:monomials}, the resulting matrix is
\begin{equation}\label{eq:matrix}
 M_{(i,\alpha,\beta),(j,r)} = \binom r\alpha\binom j\beta t_i^{r-\alpha}z_i^{j-\beta},
 \quad \alpha+\beta<m_i.
\end{equation}
The entry is defined to be zero if $r<\alpha$ or $j<\beta$.
These are Hasse derivatives, so the matrix is defined over the integers without division by factorials.
Its kernel is the space in \eqref{eq:fat-points}. In every row of Table~\ref{tab:classes},
\begin{equation}\label{eq:dimension}
 N = \sum_{j=0}^a\max\{b-3j+1,0\} = \sum_{i=1}^9\frac{m_i(m_i+1)}2.
\end{equation}
In particular, all forty matrices are square.

\subsection{Exact computation and a nonempty open set} \label{subsec:computation}

We use the same integer points for all forty matrices.
Their coordinates are given in Table \ref{tab:points}.
Rows of $M$ are ordered first by $i=1,\ldots,9$, then by $\alpha=0,\ldots,m_i-1$,
and finally by $\beta=0,\ldots,m_i-1-\alpha$.
Columns are ordered first by $j$ and then by $r$.
These conventions fix the signs of the determinants.

\begin{table}[htbp]
\centering\small
\caption{The integer configuration used in the computation.}
\vspace{3mm}
\label{tab:points}
\setlength{\tabcolsep}{3pt}
\begin{tabular}{@{}crrrrrrrrr@{}}
\toprule
$i$&1&2&3&4&5&6&7&8&9\\
\midrule
$t_i$&1&2&3&4&5&6&7&8&9\\
$z_i$&38495&64320&17708&5484&47004&37992&33632&20617&58817\\
\bottomrule
\end{tabular}
\end{table}

\begin{proposition}\label{prop:computation}
At the configuration in Table~\ref{tab:points}, each of the forty matrices \eqref{eq:matrix} associated with Table~\ref{tab:classes} is invertible over $\FF_{65521}$.
\end{proposition}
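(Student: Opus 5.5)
This is a finite exact computation, and I would prove it by direct machine verification rather than by any conceptual argument.

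First I fix the prime $P=65521$. It is the largest prime below $2^{16}$, so every product of two residues fits comfortably in 64-bit arithmetic. I reduce the coordinates $t_i,z_i$ of Table~\ref{tab:points} modulo $P$. For each of the forty labelled divisors (types I, II, V, VII with all nine choices of the distinguished index $i$, plus types III, IV, VI, VIII) I do the following.

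\begin{enumerate}[label={\rm(\arabic*)}]
\item Read off $(a,b,m_1,\dots,m_9)$ from Table~\ref{tab:classes}.
\item Enumerate the column basis \eqref{eq:monomials}, ordered by $j$ and then by $r$.
\item Enumerate the row indices $(i,\alpha,\beta)$ in the order fixed in Section~\ref{subsec:computation}.
\item Check that the two counts agree with $N$, as asserted in \eqref{eq:dimension}.
\end{enumerate}

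The entries \eqref{eq:matrix} are integers: binomial coefficients times monomials in $t_i,z_i$. I compute them modulo $P$, taking binomials from a precomputed Pascal triangle of size about $105$ (the maximal $r$ is $b\le 104$) and powers by repeated multiplication. No division is needed, since these are Hasse derivatives.

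Next I decide invertibility of each $N\times N$ matrix over $\FF_{P}$ by Gaussian elimination with pivoting. This produces the determinant modulo $P$ as the product of the pivots, times the sign of the row permutation, and the claim is exactly that this determinant is nonzero. The largest case is $N=1890$, which costs about $N^3/3\approx 2\cdot10^9$ modular operations. That is feasible in seconds to minutes in C or with an optimized library such as FLINT or Magma's $\mathrm{GF}(p)$ linear algebra. I would also record the forty determinant residues (or at least the ranks) so the certificate is reproducible. As an independent check I would recompute with a second system, for instance Sage's \texttt{matrix(GF(65521),...).rank()}.

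The main obstacle is not mathematical but a matter of reliability. Everything hinges on getting the indexing right. That means encoding the regularity constraint $r\le b-3j$ correctly, treating the multiplicity pattern $(u;v^8)_i$ for each distinguished $i$ correctly, and setting the zero conventions for $r<\alpha$ or $j<\beta$ correctly.

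\begin{itemize}
\item \emph{Square test.} A wrong basis would typically show up as a non-square matrix, so the size check $N$ against \eqref{eq:dimension} serves as a first sanity test.
\item \emph{Small-case tests.} I would test the code on small classes where the answer is known, for example $B=F-E_1$. There $h^0=1$ and the matrix is $2\times1$, so the code should return rank $1$.
\item \emph{Negative control.} I would also feed in a deliberately special configuration, such as two points on the same fiber with a class that then acquires sections, and confirm that the matrix becomes singular.
\end{itemize}

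If some matrix turned out singular at these points, I would first re-audit the encoding and only then perturb the $z_i$. Invertibility is a Zariski-open condition, so by the subsequent argument any single witness suffices.
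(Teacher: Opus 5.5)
Your proposal is correct and is essentially the paper's proof. The paper reduces the entries of \eqref{eq:matrix} modulo $65521$ with the same row and column ordering and runs Gaussian elimination over $\FF_{65521}$. It records the forty nonzero determinant residues in Tables~\ref{tab:distinguished} and~\ref{tab:uniform} and gives the code in Appendix~\ref{app:interpolation-code}. The only slip is cosmetic: with rows indexed by conditions and columns by monomials, your test matrix for $F-E_1$ is $1\times 2$ rather than $2\times 1$, and its rank $1$ indeed gives $h^0=1$.
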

\begin{proof}[Proof by exact computation]
The determinants modulo $65521$ are listed in Tables \ref{tab:distinguished} and~\ref{tab:uniform}.
Every entry is nonzero. We specify the arithmetic procedure so that the finite calculation can be reproduced from the displayed data.

\begin{table}[htbp]
\centering\small
\caption{Determinants modulo $65521$ for types with a distinguished point $i$.}
\vspace{3mm}
\label{tab:distinguished}
\begin{tabular}{@{}crrrr@{}}
\toprule
$i$& I ($N=30$)& II ($N=51$)& V ($N=715$)& VII ($N=1395$)\\
\midrule
1 & 47624 & 36904 & 56830 & 29251\\
2 & 53768 & 15232 & 61922 & 29161\\
3 & 21304 & 19578 & 19598 & 16299\\
4 & 16174 & 53795 & 36911 & 12753\\
5 & 51872 & 14462 & 1242 & 38707\\
6 & 35359 & 62458 & 5317 & 6325\\
7 & 25790 & 59274 & 32643 & 13524\\
8 & 30971 & 6837 & 16190 & 30001\\
9 & 59508 & 56302 & 26936 & 36175\\
\bottomrule
\end{tabular}
\end{table}

\begin{table}[htbp]
\centering\small
\caption{Determinants modulo $65521$ for the uniform multiplicities.}
\vspace{3mm}
\label{tab:uniform}
\begin{tabular}{@{}crr@{}}
\toprule
Type & $N$ & Determinant\\
\midrule
III & 135 & 15681\\
IV & 495 & 48983\\
VI & 1080 & 51907\\
VIII & 1890 & 11089\\
\bottomrule
\end{tabular}
\end{table}

Set $p=65521$, reduce the entries of~\eqref{eq:matrix} modulo $p$,
and apply Gaussian elimination over $\FF_p$.
More explicitly, start with $d=1$.
At column $c$, choose the first row $s\>= c$ with $M_{s,c}\ne 0$.
If there is no such row, the determinant is zero.
Otherwise interchange rows $s,c$ if necessary, changing the sign of $d$.
Set $a=M_{c,c}$ and replace $d$ by $da$. For each row $i>c$,
put $\lambda=M_{i,c}a^{-1}$ and perform
\[ M_{i,j}\longleftarrow M_{i,j}-\lambda M_{c,j} \quad(j>c),\quad M_{i,c}\longleftarrow0.\]
After the last column, $d$ is the determinant.
Inversion can be computed as $a^{-1}=a^{p-2}$ in $\FF_p$.

The displayed coordinates, matrix formula, row and column order,
and elimination rule specify the finite calculation completely.
The code implementing this calculation is listed in Appendix \ref{app:interpolation-code}.
\end{proof}

\begin{remark}\label{rem:65521} \rm
The modulus $65521$ used to verify invertibility of the matrices has precedents in computational mathematics.
First, $65521$ is the largest prime smaller than the Fermat prime number $2^{2^4}+1=2^{16}+1=65537$,
so every residue modulo $65521$ fits in an unsigned 16-bit integer.
One application is the Adler-32 checksum used in the zlib format.
Its two sums are computed modulo $65521$, see \cite{RFC1950}.
Since PNG image data are stored in zlib streams, the same checksum also occurs in PNG \cite{RFC2083}.
In Adler-32, the prime modulus helps avoid a class of two-byte errors that would otherwise go undetected \cite[Section 8.2]{RFC1950}.
The zlib technical notes also explain that larger sums and a prime modulus reduce some checksum collisions. 
By contrast, the Fletcher-32 algorithm uses the composite modulus $65535=2^{16}-1$,
whereas Adler-32 uses the prime modulus $65521$. 
The field $\FF_{65521}$ has also been used in published computations of matrix rank defects \cite{Faugere2010}.
For instance, a LinBox tutorial uses $\FF_{65521}$ for an exact determinant computation. 
The rationale for using $65521$ in this paper is the exact finite-field computation.
The residues fit in unsigned 16-bit integers, although the implementation uses wider integers for intermediate arithmetic.
A determinant that is nonzero modulo $65521$ is nonzero as an integer.
Since its entries are polynomial functions of the point coordinates, its determinant is a nonzero polynomial.
Therefore, the corresponding interpolation condition holds on a nonempty Zariski open set of configurations.
\end{remark}

\begin{lemma}\label{lem:open-set}
There is a nonempty Zariski open subset $U_3\subset\Conf_9(\FF_3)$ such that
$\Homolo^0(X,\calO_X(A_i-A_j))=0$ for every $i<j$ and every configuration in $U_3$.
\end{lemma}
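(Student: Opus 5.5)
The plan is to split the $78$ differences $A_i-A_j$ ($i<j$) into the $36$ exceptional differences and the $42$ pairs recorded in Table~\ref{tab:classes}. We then handle the second group by turning the interpolation computation into a nonvanishing polynomial condition on the configuration space.

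For the $36$ pairs among $D_1,\ldots,D_9$, we have $A_i-A_j=E_{j'}-E_{i'}$ with $i'\neq j'$. Here \eqref{eq:excep-diff} gives the vanishing for \emph{every} configuration, so these pairs impose no condition on $U_3$. For each of the remaining $40$ divisors $B=aC+bF-\sum m_jE_j$ (with labelled points), we use \eqref{eq:fat-points}. It identifies $\Homolo^0(X,\calO_X(B))$ with the kernel of the matrix $M_B$ of \eqref{eq:matrix}, written in the monomial basis \eqref{eq:monomials}. This identification needs the points to lie off $C$, so that they have affine coordinates $(t_i,z_i)$ in $\Tot\calO_{\PP^1}(3)$. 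We also want them on distinct fibers and in the base chart where $t$ is a coordinate. These conditions define a nonempty open subset $V\subset\Conf_9(\FF_3)$, and the integer configuration of Table~\ref{tab:points} lies in it.

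On $V$, the entries of $M_B$ are polynomials with integer coefficients in $(t_i,z_i)$. By \eqref{eq:dimension}, $M_B$ is square. So $\det M_B$ is a polynomial $P_B\in\ZZ[t_1,z_1,\ldots,t_9,z_9]$, and $\Homolo^0(X,\calO_X(B))=0$ exactly when $P_B\neq 0$ at the configuration. Proposition~\ref{prop:computation} says that $P_B$ evaluated at the configuration of Table~\ref{tab:points} is nonzero modulo $65521$. Hence it is a nonzero integer, and $P_B$ is a nonzero polynomial. Its nonvanishing locus $V_B\subseteq V$ is therefore a nonempty Zariski open set.

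We set $U_3=\bigcap_B V_B$, taken over the $40$ divisors. This is a finite intersection of nonempty open subsets of the irreducible variety $\Conf_9(\FF_3)$, so it is nonempty, and indeed it contains the configuration of Table~\ref{tab:points}. On $U_3$ all $78$ groups vanish.

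The main point needing care is the middle step. We must check that the kernel of $M_B$ really computes $\Homolo^0$ uniformly over $V$, and not just at one point. This amounts to two things. First, the basis \eqref{eq:monomials} is independent of the configuration. Second, the vanishing-order conditions at $p_i$ are given by Hasse derivatives in the fixed coordinates, which is \eqref{eq:fat-points} together with $\rho_*\calO_X(-\sum m_iE_i)=\bigcap\mathcal I_{p_i}^{m_i}$. A further bookkeeping point is that the labelled divisors of types I, II, V and VII (all nine choices of $i$) and of types III, IV, VI and VIII exhaust the $42$ pairs, as recorded after Table~\ref{tab:classes}.
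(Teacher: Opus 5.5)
Your proposal is correct and follows the paper's proof essentially step for step. The $36$ differences $E_j-E_i$ are handled by \eqref{eq:excep-diff} for every configuration, and for the remaining $40$ divisors the determinants of the square matrices \eqref{eq:matrix} are integer polynomials that are nonzero at the configuration of Table~\ref{tab:points} by Proposition~\ref{prop:computation}. Your intersection of the forty nonvanishing loci with the affine chart $V$ is exactly the paper's $U_3=T_9\cap\{\Delta\neq0\}$, where $\Delta$ is the product of the determinants.
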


\begin{proof}
Regarding the point coordinates as variables,
the determinants in Proposition \ref{prop:computation} are polynomials over $\ZZ$.
At the integer configuration of Table \ref{tab:points},
each determinant is nonzero modulo $65521$, hence is a nonzero integer.
In particular, the same configuration over $\CC$ makes all forty determinants nonzero.

Let $\Delta$ be their product, and let
\[ T_9=\bigl\{(t_i,z_i)_{i=1}^9\in\Aff^{18}: t_i\neq t_j\text{ for }i\neq j\bigr\}. \]
This is an open subset of $\Conf_9(\FF_3)$. The subset
\begin{equation}\label{eq:U3}
 U_3=T_9\cap\{\Delta\neq0\}
\end{equation}
is nonempty and Zariski open. On this set, the forty kernels in \eqref{eq:fat-points} vanish. The other thirty-six differences are covered by \eqref{eq:excep-diff}.
\end{proof}

\subsection{The proof of Theorem \ref{thm:base}}

\begin{proposition}\label{prop:exceptional}
For every configuration in $U_3$, the collection $\calE$ is exceptional.
\end{proposition}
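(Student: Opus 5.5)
Each line bundle is exceptional because $X$ is rational: $\Ext^q(L,L)=\Homolo^q(X,\calO_X)$, which is $\CC$ for $q=0$ and zero otherwise. So the task reduces to semiorthogonality. For $i<j$ we need
\[ \Ext^q(L_j,L_i)=\Homolo^q\bigl(X,\calO_X(A_i-A_j)\bigr)=0 \quad\text{for } q=0,1,2. \]
Write $B=A_i-A_j$. Lemma~\ref{lem:numerical} gives $\chi(X,\calO_X(B))=0$. Hence it suffices to show $h^0(X,\calO_X(B))=0$ and $h^2(X,\calO_X(B))=0$, after which $h^1=0$ is forced.

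The vanishing of $h^0$ is exactly Lemma~\ref{lem:open-set}. It uses \eqref{eq:excep-diff} for the $36$ differences $D_i-D_j$ and the nonvanishing determinants on $U_3$ for the other $42$ pairs.

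For $h^2$, Serre duality \eqref{eq:RR-general} gives $h^2(X,\calO_X(B))=h^0(X,\calO_X(K-B))$. The plan is to kill this group with the nef class $F$: if $(K-B)\cdot F<0$, then $\Homolo^0=0$. We have $K\cdot F=-2$. Write $B=aC+bF-\sum m_kE_k$, so that $B\cdot F=a$. The condition $(K-B)\cdot F<0$ becomes $a>-2$, i.e.\ $a\>= -1$.

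We check this case by case.
\begin{itemize}
\item For the $36$ differences $D_i-D_j=E_j-E_i$ we have $a=0$.
\item Every row of Table~\ref{tab:classes} has $a\>=4$.
\end{itemize}
So $(K-B)\cdot F\=<-2<0$ in all $78$ cases, $h^2=0$, and therefore $h^1=-\chi=0$. This holds on all of $U_3$.

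The only step that needs any real care is making sure the $h^2$ argument covers every pair. That comes down to reading off the $C$-coefficient of each difference, which the table already supplies, so there is no genuine obstacle. The substantive input is the $h^0$ vanishing, which is already established in Lemma~\ref{lem:open-set}.
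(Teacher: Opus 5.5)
Your proposal is correct and follows the paper's proof essentially step for step. You get $\chi=0$ from Lemma~\ref{lem:numerical} and $h^0=0$ from Lemma~\ref{lem:open-set}. You get $h^2=0$ from Serre duality together with $(K-B)\cdot F=-2-a<0$ for the nef class $F$, then $h^1=0$ by Riemann--Roch, and exceptionality of each line bundle from rationality of $X$. Your explicit check ($a=0$ for $D_i-D_j$, and $a\geqslant 4$ in every row of Table~\ref{tab:classes}) just spells out the paper's one-line remark that the $C$-coefficient of every difference is nonnegative.
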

\begin{proof}
Let $B=A_i-A_j$ with $i<j$. By Lemmas \ref{lem:numerical} and \ref{lem:open-set},
we have $\chi(X,\calO_X(B))=0$ and $h^0(X,\calO_X(B))=0$.
The coefficient of $C$ in $B$ is nonnegative for every difference under consideration.
Thus, $(K-B)\cdot F=-2-B\cdot F<0$.
Since $F$ is numerically effective, Serre duality gives $h^2(X,\calO_X(B))=0$.
Riemann--Roch then gives $h^1(X,\calO_X(B))=0$. Consequently,
\[ \Ext^q(L_j,L_i)=\Homolo^q(X,\calO_X(A_i-A_j))=0 \quad (i<j,\ q\in\ZZ).\]
Therefore, every line bundle on the rational surface $X$ is exceptional,
since $\Homolo^q(X,\calO_X)=0$ for $q>0$ and $\Homolo^0(X,\calO_X)=\CC$.
\end{proof}

\begin{proposition}\label{prop:nonfull}
For every configuration in $U_3$, one has $\ph(\calE)\>=2$. In particular, $\calE$ is not full.
\end{proposition}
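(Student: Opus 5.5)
The plan is to deduce the statement directly from Lemma~\ref{lem:nonfull-criterion}. By Proposition~\ref{prop:exceptional}, $\calE$ is an exceptional collection of line bundles for every configuration in $U_3$. It therefore suffices to show that $\Hom(L_i,L_j)=\Homolo^0(X,\calO_X(A_j-A_i))=0$ for all $0\=< i<j\=<12$. This is the \emph{forward} vanishing, complementary to the backward vanishing already proved. Once it holds, Lemma~\ref{lem:nonfull-criterion} gives $\ph(\calE)\>=2$, and non-fullness follows from \cite[Corollary 6.2]{Kuz2015} as recorded there.

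To verify the forward vanishing, I would split the $78$ pairs exactly as in the previous subsection.

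\textbf{The $36$ pairs among the $D_i$.} For $1\=< i<j\=<9$ we have $A_j-A_i=D_j-D_i=E_i-E_j$. Its $\Homolo^0$ vanishes by \eqref{eq:excep-diff}, with the roles of the indices swapped.

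\textbf{The remaining $42$ pairs.} Here $A_j-A_i=-(A_i-A_j)$, and $A_i-A_j$ is one of the divisors in Table~\ref{tab:classes}. Write such a divisor as $aC+bF-\sum m_kE_k$. Then
\[ (A_j-A_i)\cdot F=-a\cdot (C\cdot F)=-a, \]
since $F^2=0$ and $E_k\cdot F=0$ by \eqref{eq:intersection-general}. The table shows $a\>=4$ in every row, so $(A_j-A_i)\cdot F<0$.

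Since $F$ is nef, a divisor with negative intersection against $F$ has no global sections, as noted after \eqref{eq:canonical-general}. Hence $\Homolo^0(X,\calO_X(A_j-A_i))=0$ for all these pairs. This requires no genericity beyond $U_3$ and no further interpolation.

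The only step needing care is the bookkeeping: confirming that the $42$ pairs really are exhausted by the eight types in Table~\ref{tab:classes}, each with positive $C$-coefficient. I would double-check this directly from the $C$-coefficients of the $A_i$ in \eqref{eq:Di}, \eqref{eq:G} and \eqref{eq:collection}. These coefficients are:
\begin{itemize}
  \item $0$ for $\calO_X$;
  \item $-5$ for each $D_i$;
  \item $-9$ for $G$;
  \item $-26$ for $C+3G$;
  \item $-35$ for $C+4G$.
\end{itemize}
Apart from the $D_i$, these coefficients strictly decrease along the collection. Thus every later-minus-earlier difference outside the $D_i$ block has strictly negative $C$-coefficient, and so strictly negative intersection with $F$. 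This confirms the argument independently of the table.
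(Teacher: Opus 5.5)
Your proposal is correct and takes the same route as the paper. Both arguments read off the $C$-coefficients $0,-5,-9,-26,-35$, which give $(A_j-A_i)\cdot F<0$ for every pair with $i<j$ outside the $D_i$ block, and both handle the $36$ pairs inside that block by \eqref{eq:excep-diff}. Both then conclude with Lemma~\ref{lem:nonfull-criterion}, which gives $\ph(\calE)\geqslant 2$ and hence non-fullness.
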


\begin{proof}
The coefficients of $C$ in $A_0,\ldots,A_{12}$, in order, are
\[ 0,\underbrace{-5,\ldots,-5}_{9\text{ entries}},-9,-26,-35. \]
For $i<j$, unless both terms are among $D_1,\ldots,D_9$,
it follows that $(A_j-A_i)\cdot F<0$. Hence $\Homolo^0(X,\calO_X(A_j-A_i))=0$.
When both terms are among the $D_i$, the same conclusion follows from \eqref{eq:excep-diff}.
We have proved $\Hom(L_i,L_j)=0$ for every $i<j$.
Therefore, Lemma \ref{lem:nonfull-criterion} implies $\ph(\calE)\>= 2$, and $\calE$ is not full.
\end{proof}

Now we can prove Theorem \ref{thm:base}.

\begin{proof}[Proof of Theorem \ref{thm:base}]
By Propositions~\ref{prop:exceptional} and~\ref{prop:nonfull}, the
collection $\calE$ is exceptional and not full. Its right orthogonal
$\calP_3$ is therefore nonzero and admissible, with a semiorthogonal
decomposition
\begin{equation}\label{eq:base-sod}
 \Dcat^b(X)=\langle\calP_3,L_0,\ldots,L_{12}\rangle.
\end{equation}
By Lemma~\ref{lem:additive-invariants} and additivity of $K_0$,
\[
 \ZZ^{13}\cong K_0(X)\cong K_0(\calP_3)\oplus\ZZ^{13}.
\]
The group $K_0(\calP_3)$ is a direct summand of a finitely generated free
abelian group and has rank zero. Hence $K_0(\calP_3)=0$.

Again by Lemma~\ref{lem:additive-invariants}, $\HHomolo_q(X)=0$ for $q\neq0$
and $\dim_\CC\HHomolo_0(X)=13$. Each of the thirteen exceptional objects
in~\eqref{eq:base-sod} contributes one copy of $\CC$ in degree zero.
Additivity of Hochschild homology gives $\HHomolo_q(\calP_3)=0$ for every
$q\in\ZZ$. Thus $\calP_3$ is a phantom subcategory.
\end{proof}

\section{Elementary transformations and the blow-up formula}
\label{sec:propagation}

\textsl{We now deduce Theorem \ref{thm:main-intro} from Theorem \ref{thm:base}.
The geometric step is to contract suitable strict transforms of fibers.
We also give an explicit map of configurations to ensure that the resulting nine points belong to the open subset $U_3$.}

\subsection{Elementary transformations}

\begin{lemma}\label{lem:elementary}
Let $n\>=2$ and $p\in\FF_n\setminus C_n$.
Blow up $p$, and contract the strict transform $\Gamma$ of the fiber through $p$.
The resulting surface is $\FF_{n-1}$, and the image of $C_n$ is its negative section.
\end{lemma}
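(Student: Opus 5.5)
Let $F_p$ be the fiber of the ruling $f:\FF_n\to\PP^1$ through $p$, let $\sigma:Y=\Bl_p\FF_n\to\FF_n$ be the blow-up with exceptional curve $E$, and let $\Gamma=\sigma^*F_p-E$ be the strict transform of $F_p$. The plan is to contract $\Gamma$ by Castelnuovo's criterion, show that the result is a $\PP^1$-bundle over $\PP^1$, and identify its type from the self-intersection of the image of $C_n$.

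\textbf{Contracting $\Gamma$.} Since $F_p\cong\PP^1$ is smooth, $\Gamma\cong\PP^1$ and
\[ \Gamma^2=F^2-1=-1. \]
Castelnuovo's contractibility criterion \cite[Chapter V, Theorem 5.7]{Hart1977} then gives a smooth projective surface $Z$ and a morphism $\tau:Y\to Z$ contracting $\Gamma$ to a point $q$ and an isomorphism elsewhere.

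\textbf{$Z$ is ruled over $\PP^1$.} The composite $f\circ\sigma:Y\to\PP^1$ is constant on $\Gamma$, because $\Gamma$ lies in the fiber over $f(p)$. Since $\tau_*\calO_Y=\calO_Z$, this map descends to a morphism $g:Z\to\PP^1$ with $g\circ\tau=f\circ\sigma$.
\begin{itemize}
\item Over any point other than $f(p)$, the fibers of $g$ are the original fibers of $f$.
\item The total fiber of $f\circ\sigma$ over $f(p)$ is $\Gamma+E$. Its image in $Z$ is $\tau(E)$.
\item Since $E\cdot\Gamma=1$, we get $\tau(E)^2=E^2+(E\cdot\Gamma)^2=0$, and $\tau(E)$ is a smooth rational curve.
\end{itemize}
So every fiber of $g$ is a smooth rational curve, and $g$ is a $\PP^1$-bundle \cite[Chapter V, Section 2]{Hart1977}. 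Hence $Z\cong\FF_m$ for some $m\>=0$.

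\textbf{The image of $C_n$.} Since $p\notin C_n$, the curve $C_n$ pulls back isomorphically to a curve $\tilde C\subset Y$ with
\[ \tilde C^2=-n,\qquad \tilde C\cdot E=0,\qquad \tilde C\cdot\Gamma=C_n\cdot F=1. \]
Set $C'=\tau(\tilde C)$. Because $\tilde C$ meets $\Gamma$ transversally in one point,
\[ C'^2=\tilde C^2+(\tilde C\cdot\Gamma)^2=-n+1=-(n-1). \]
Also $C'\cdot g^{-1}(\mathrm{pt})=1$, so $C'$ is a section of $g$ with self-intersection $-(n-1)$.

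\textbf{Identifying $Z$.} On $\FF_m$ the only irreducible curve of negative self-intersection is $C_m$, and every other section has self-intersection at least $m$. Since $n\>=2$, we have $-(n-1)<0$, so $C'=C_m$ and $m=n-1$. Thus $Z\cong\FF_{n-1}$ and $C'$ is its negative section. This is where the hypothesis $n\>=2$ is used: for $n=1$ the section $C'$ would have self-intersection $0$ and would not single out a unique negative section.

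The main point needing care is the assertion that $g$ is a $\PP^1$-bundle, that is, the descent of the ruling and the smoothness of every fiber. The rest is bookkeeping with self-intersection numbers under blow-ups and contractions. If citing the ruled-surface characterization is felt to be too quick, one can instead use the direct local description of elementary transformations in \cite[Chapter V, Example 5.7.1]{Hart1977}.
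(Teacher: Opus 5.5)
Your proof is correct and takes the same route as the paper: contract $\Gamma$ using $\Gamma^2=-1$, descend the ruling to a $\PP^1$-bundle, show that the image of $C_n$ is a section of self-intersection $-(n-1)<0$, and conclude $Z\cong\FF_{n-1}$ from the classification of curves on Hirzebruch surfaces. You fill in details the paper leaves implicit: the descent via $\tau_*\calO_Y=\calO_Z$, the fact that the special fiber $\tau(E)$ is a smooth rational curve of self-intersection zero, and the section required by the ruled-surface characterization; the paper in turn also records the local coordinate formula $(t,z)\mapsto\bigl(t,\frac{z-b}{t-a}\bigr)$ for the transformation.
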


\begin{proof}
On the blow-up, $\Gamma=F-E$ satisfies $\Gamma^2=-1$, so it can be contracted.
The ruling descends to a $\PP^1$-bundle over $\PP^1$.
The strict transform of $C_n$ meets $\Gamma$ transversely once,
and its image is a section of self-intersection $-n+1<0$.
The classification of ruled surfaces identifies the resulting surface
with $\FF_{n-1}$; see~\cite[Chapter V, Section 2]{Hart1977}.

On the affine chart in $\Tot\calO_{\PP^1}(n)$, if $p=(a,b)$, the
transformation has the expression
\[ (t,z)\mapsto \left(t,\frac{z-b}{t-a}\right).\]
The base point is resolved by blowing up $p$.
The rest of the fiber $t=a$ is mapped to the point on the negative section over $a$.
\end{proof}

\begin{lemma}\label{lem:contraction}
Let $n\>=3$, put $k=n-3$, and let $X=\Bl_{p_1,\ldots,p_{k+9}}\FF_n$.
Suppose that the points lie away from $C_n$ and on distinct fibers.
There is a morphism
\begin{equation}\label{eq:contraction}
 \pi:X \to  Y=\Bl_{q_1,\ldots,q_9}\FF_3
\end{equation}
contracting the $k$ disjoint curves $\Gamma_i=F-E_i$, $1\=< i\=< k$.
The points $q_1,\ldots,q_9$ are the images of $p_{k+1},\ldots,p_{k+9}$ under the corresponding elementary transformations.
\end{lemma}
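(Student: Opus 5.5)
The plan is to contract the curves $\Gamma_1,\ldots,\Gamma_k$ one at a time and to control the intermediate surfaces by induction on $k$, using Lemma~\ref{lem:elementary} at each step.

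For $k=0$ there is nothing to contract: take $\pi=\mathrm{id}$ and $q_i=p_i$. For $k\>=1$, write $X$ as the blow-up of $\Bl_{p_1}\FF_n$ at the remaining points $p_2,\ldots,p_{k+9}$. These points lie on fibers different from the fiber through $p_1$, so they avoid $\Gamma_1$.

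\emph{Step 1: the classes $\Gamma_i$.} Since the points lie on distinct fibers, the strict transform of the fiber through $p_i$ has class $F-E_i$. By \eqref{eq:intersection-general},
\[ (F-E_i)^2=-1,\qquad (F-E_i)\cdot(F-E_j)=0 \quad (i\neq j). \]
Each $\Gamma_i$ is a smooth rational curve, being isomorphic to a fiber. Strict transforms of distinct fibers are disjoint, because distinct fibers are disjoint in $\FF_n$ and the blown-up points on them are different. Hence $\Gamma_1,\ldots,\Gamma_k$ are disjoint $(-1)$-curves.

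\emph{Step 2: contraction.} By Castelnuovo's criterion, the disjoint $(-1)$-curves $\Gamma_1,\ldots,\Gamma_k$ can be contracted simultaneously to smooth points, giving a smooth projective surface $Y$ and a morphism $\pi\colon X\to Y$ (see \cite[Chapter V, Section 5]{Hart1977}).

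\emph{Step 3: identifying $Y$.} Blowing up $p_1$ and contracting $\Gamma_1$ is exactly the elementary transformation of Lemma~\ref{lem:elementary}, which turns $\FF_n$ into $\FF_{n-1}$. The two operations, this transformation and blowing up the points $p_j$ with $j\neq 1$, commute: they take place over disjoint open sets of the base $\PP^1$, namely over a neighbourhood of the fiber of $p_1$ versus its complement. Near the fiber over $a_1$ the other points are untouched.

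The main check at this step is that the images of the remaining points stay away from the new negative section and on distinct fibers, so that the hypotheses persist for the induction. In the affine chart the transformation is $(t,z)\mapsto(t,(z-b_1)/(t-a_1))$. This preserves $t$, so fibers stay distinct, and it is regular at every point with $t\neq a_1$, so it sends points of $\Tot\calO_{\PP^1}(n)$ into $\Tot\calO_{\PP^1}(n-1)$, which is the complement of $C_{n-1}$. By Lemma~\ref{lem:elementary}, the image of $C_n$ is that negative section, and no image point lies on it.

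Repeating this $k$ times lowers $n$ to $n-k=3$. Thus $Y\cong\Bl_{q_1,\ldots,q_9}\FF_3$, where the $q_i$ are the successive images of $p_{k+1},\ldots,p_{k+9}$.

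I expect the only delicate point to be the commutation in Step 3: one must identify the contraction of $\Gamma_1$ inside the larger blow-up with the blow-up of $\FF_{n-1}$ at the images. This follows because contraction of a $(-1)$-curve is local, and $\Gamma_1$ is disjoint from the other exceptional curves, so the universal property of the blow-up applies over an open set where the two constructions agree.
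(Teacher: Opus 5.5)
Your proof is correct and follows essentially the same route as the paper: you apply Lemma~\ref{lem:elementary} repeatedly to contract the strict transforms $\Gamma_1,\ldots,\Gamma_k$. The blow-ups at the remaining points commute with each elementary transformation because that transformation is an isomorphism away from its fiber. Your inductive check that the image points stay off the new negative section and on distinct fibers, so that the lemma can be reapplied at each step, spells out a detail the paper leaves implicit.
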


\begin{proof}
First blow up $p_1,\ldots,p_k$ and contract the strict transforms of their fibers.
These are disjoint $(-1)$-curves. Repeated application of Lemma \ref{lem:elementary} reduces the index from $n$ to $n-k=3$.
Each elementary transformation is an isomorphism away from its fiber.
The last nine points lie away from these fibers, so their blow-ups commute with the elementary transformations.
This shows \eqref{eq:contraction}.
\end{proof}

Write $C_Y,F_Y$ for the negative section and fiber classes on $Y$.
On $X$, the pullback classes satisfy
\begin{equation}\label{eq:pullback-classes}
 \pi^*C_Y=C_n+\sum_{i=1}^k(F-E_i),\quad
 \pi^*F_Y=F,\quad
 K_X=\pi^*K_Y+\sum_{i=1}^k\Gamma_i.
\end{equation}
In particular, $\pi$ is the blow-up of $k$ distinct points on the strict transform of $C_3$ in $Y$. The last nine blow-up points are away from that section. The points blown up by $\pi$ need not be general in $Y$.

\subsection{General configurations}

For any $s$, let $T_s$ denote the affine open set of ordered coordinates $(t_i,z_i)_{i=1}^s$ with pairwise distinct $t_i$.
When working on $\FF_n$, it is viewed in the chart of $\Tot\calO_{\PP^1}(n)$ over $\Aff^1$.
Thus, $T_s$ is an open subset of $\Conf_s(\FF_n)$.

\begin{lemma}\label{lem:configuration-map}
For every $n\>=3$, the elementary transformations in Lemma \ref{lem:contraction} define,
after a choice of coordinates, a surjective morphism
\[ \Psi_n:T_{n+6} \to  T_9. \]
Consequently, the condition that $(q_1,\ldots,q_9)$ belong to $U_3$ holds on a nonempty Zariski open subset of $\Conf_{n+6}(\FF_n)$.
\end{lemma}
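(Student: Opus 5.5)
We will write the composite of the $k=n-3$ elementary transformations explicitly in the affine coordinates $(t,z)$ of the chart $\Tot\calO_{\PP^1}(n)$ over $\Aff^1$. By Lemma~\ref{lem:elementary}, the transformation centred at $p=(a,b)$ is $(t,z)\mapsto\bigl(t,(z-b)/(t-a)\bigr)$. Its source is the chart of $\Tot\calO_{\PP^1}(m)$ and its target is the chart of $\Tot\calO_{\PP^1}(m-1)$. Over the complement of the fiber $t=a$ it is an isomorphism. Applying the transformations centred at $p_1=(t_1,z_1),\dots,p_k=(t_k,z_k)$ in turn sends $p_{k+l}=(t_{k+l},z_{k+l})$ to
\[ q_l=\Bigl(t_{k+l},\ \frac{z_{k+l}-P(t_{k+l})}{\prod_{i=1}^k(t_{k+l}-t_i)}\Bigr),\qquad 1\=< l\=< 9, \]
where $P$ is the Lagrange interpolation polynomial of degree $<k$ with $P(t_i)=z_i$ for $i\=< k$. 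To prove this formula, we induct on the number of transformations applied so far. At each step the earlier centres are also pushed forward: the later centre $p_j$ moves to $(t_j,(z_j-z_1)/(t_j-t_1))$, and so on. This is Newton's divided-difference scheme, so the composite is the displayed map. The denominators are nonzero on $T_{n+6}$ because the $t_i$ are pairwise distinct. Hence each coordinate is a regular function on $T_{n+6}$, and $\Psi_n$ is a morphism into $T_9$: the $t$-coordinates of the $q_l$ are $t_{k+1},\dots,t_{k+9}$, which are still pairwise distinct.

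For surjectivity, fix $(t'_l,z'_l)_{l=1}^9\in T_9$. Choose $t_1,\dots,t_k\in\CC$ distinct from each other and from all $t'_l$, and choose arbitrary $z_1,\dots,z_k$, for instance all zero so that $P=0$. Then set $t_{k+l}=t'_l$ and
\[ z_{k+l}=P(t'_l)+z'_l\prod_{i=1}^k(t'_l-t_i). \]
This gives a preimage. We also check that these points match the setup of Lemma~\ref{lem:contraction}: they lie in the affine chart, hence away from $C_n$, and they lie on distinct fibers. So $\Psi_n$ genuinely computes the points $q_1,\dots,q_9$ of Lemma~\ref{lem:contraction}, and the identification of $Y$ with $\Bl_{q_1,\dots,q_9}\FF_3$ is compatible with these coordinates.

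For the consequence, set $V_n=\Psi_n^{-1}(U_3)$. Here $U_3=T_9\cap\{\Delta\neq0\}$ is open in $T_9$ by Lemma~\ref{lem:open-set}, so $V_n$ is Zariski open in $T_{n+6}$, and $T_{n+6}$ is open in $\Conf_{n+6}(\FF_n)$. Surjectivity of $\Psi_n$ and $U_3\neq\emptyset$ give $V_n\neq\emptyset$. The main obstacle is the bookkeeping in the inductive step: one must verify that after each transformation the remaining centres still have distinct $t$-coordinates and lie in the new affine chart of $\Tot\calO_{\PP^1}(m-1)$, i.e.\ away from the new negative section. This holds because the transformation preserves $t$ and, off the exceptional fiber, maps the affine chart into the affine chart. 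We also have to keep track of the fiber-coordinate conventions so that the target chart is the standard chart of $\FF_3$ used in Table~\ref{tab:points}. If a different normalization of the frame were needed, it would change the $z$-coordinates only by an automorphism of $T_9$, which preserves surjectivity.
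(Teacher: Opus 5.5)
Your proposal is correct and follows essentially the paper's route: the same formula for $\Psi_n$ (your $P$ and $\prod(t-t_i)$ are the paper's $g$ and $h$), the same explicit preimage for surjectivity, and the same open set $U_n=\Psi_n^{-1}(U_3)$. The only difference is how the composite is derived. You apply the transformations one at a time and recognise Newton's divided differences. The paper instead first translates by $-g$, an automorphism of $\FF_n$ fixing $C_n$, so that all centres lie on the zero section and the composite is just division by $h(t)$; it also checks directly that $w'=t^{-3}w$ over $t=\infty$.
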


\begin{proof}
Put $k=n-3$. If $k>0$, define
\begin{align}
 h(t)&=\prod_{i=1}^k(t-t_i),\label{eq:polynomial-h}\\
 g(t)&=\sum_{i=1}^k z_i
       \prod_{\substack{1\=<\ell\=< k\\\ell\neq i}}
       \frac{t-t_\ell}{t_i-t_\ell}.\label{eq:polynomial-g}
\end{align}
Then $\deg g<k$ and $g(t_i)=z_i$ for $1\=< i\=< k$. For $k=0$, set $h=1$ and $g=0$.

Since $\deg g\=< n$, translation by $-g$ is an automorphism of $\Tot\calO_{\PP^1}(n)$ extending to $\FF_n$ and fixing $C_n$.
It sends the first $k$ points to the zero section.
Performing the elementary transformation at each of these points then gives
\begin{equation}\label{eq:birational-map}
 (t,z)\dashrightarrow(t,w),\quad w=\frac{z-g(t)}{h(t)}.
\end{equation}
The division by $h$ changes the degree of the line bundle from $n$ to $n-k=3$.
More explicitly, on the other base chart,
put $h_\infty(t')=t^{-k}h(t)$ and $g_\infty(t')=t^{-n}g(t)$.
Then
\[ w'=\frac{z'-g_\infty(t')}{h_\infty(t')} =t^{-3}w.\]
Here, we have $h_\infty(0)=1$, so no further elementary transformation occurs over infinity.

The remaining nine points are therefore mapped to
\begin{equation}\label{eq:Psi}
 q_j=\left(t_{k+j},
 \frac{z_{k+j}-g(t_{k+j})}{h(t_{k+j})}\right),
 \quad 1\=< j\=<9.
\end{equation}
All denominators are nonzero on $T_{k+9}$. Thus~\eqref{eq:Psi} defines a morphism $\Psi_n:T_{k+9}\to T_9$.

To prove surjectivity, let $(s_j,w_j)_{j=1}^9\in T_9$. Choose distinct $t_1,\ldots,t_k$ avoiding all the $s_j$, and choose any $z_1,\ldots,z_k$. Form $g,h$ as above and set
\[ t_{k+j}=s_j,\quad z_{k+j}=g(s_j)+h(s_j)w_j \quad(1\=< j\=<9). \]
This configuration belongs to $T_{k+9}$ and maps to the prescribed one. It follows that
\begin{equation}\label{eq:Un}
 U_n=\Psi_n^{-1}(U_3)
\end{equation}
is nonempty and open in $T_{k+9}$, hence in $\Conf_{n+6}(\FF_n)$.
\end{proof}

\subsection{Phantoms under blow-up}

\begin{lemma}\label{lem:phantom-pullback}
Let $\pi:X\to Y$ be the blow-up of a smooth projective surface at finitely many distinct points. If $\calP\subset\Dcat^b(Y)$ is a phantom subcategory, then $L\pi^*(\calP)\subset\Dcat^b(X)$ is a phantom subcategory equivalent to $\calP$.
\end{lemma}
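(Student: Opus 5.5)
The plan is to use Orlov's blow-up formula \eqref{eq:orlov} together with the additivity of $K_0$ and Hochschild homology under semiorthogonal decompositions.

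\emph{Step 1: the image is equivalent to $\calP$.} Since $L\pi^*\colon\Dcat^b(Y)\to\Dcat^b(X)$ is fully faithful, its restriction to $\calP$ is an equivalence of $\calP$ onto the full triangulated subcategory $L\pi^*(\calP)$. In particular $L\pi^*(\calP)$ is nonzero and closed under direct summands. Equivalences of triangulated categories preserve $K_0$, so $K_0(L\pi^*(\calP))\cong K_0(\calP)=0$. For Hochschild homology I would argue through the decomposition rather than transporting enhancements by hand: $\HHomolo_\bullet$ of an admissible subcategory is computed from its standard induced enhancement. The functor $L\pi^*$ is a Fourier--Mukai functor, so it lifts to a quasi-equivalence of the induced dg enhancements, and therefore $\HHomolo_q(L\pi^*(\calP))\cong\HHomolo_q(\calP)=0$ for all $q$.

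\emph{Step 2: admissibility.} This is the step needing the most care. Write $\Dcat^b(Y)=\langle\calP,{}^\perp\!\calP\rangle$, using the convention of the paper and $\calP$ admissible. Substituting into \eqref{eq:orlov} gives
\[ \Dcat^b(X)=\bigl\langle \calO_{\Gamma_1}(-1),\ldots,\calO_{\Gamma_r}(-1), L\pi^*\calP, L\pi^*({}^\perp\!\calP)\bigr\rangle, \]
because $L\pi^*$ is fully faithful and hence carries semiorthogonal decompositions of $\Dcat^b(Y)$ to semiorthogonal decompositions of the component $L\pi^*\Dcat^b(Y)$.

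\emph{Step 3: conclusion.} In a semiorthogonal decomposition of $\Dcat^b(X)$, with $X$ smooth projective, every component is admissible. This follows because $\Dcat^b(X)$ is saturated, or alternatively because each component is admissible inside an admissible piece, and admissibility is transitive. Hence $L\pi^*(\calP)$ is an admissible subcategory of $\Dcat^b(X)$. Its Hochschild homology is the same as that of $\calP$ by the Fourier--Mukai argument in Step 1. Combining Steps 1 and 2, $L\pi^*(\calP)$ is a nonzero admissible subcategory with vanishing $K_0$ and Hochschild homology, that is, a phantom equivalent to $\calP$.

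As a consistency check, additivity applied to the decomposition in Step 2 is compatible with Lemma~\ref{lem:additive-invariants}: the $r$ exceptional objects account exactly for the extra $\ZZ^r$ in $K_0$ and $\CC^r$ in $\HHomolo_0$.
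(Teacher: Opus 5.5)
Your proof is correct and follows essentially the paper's route. Orlov's formula makes $L\pi^*$ fully faithful with admissible image, and admissibility of $L\pi^*(\calP)$ then follows by transitivity of admissible embeddings. That is your second option in Step 3 and exactly the paper's argument. The refined decomposition in Step 2, together with the Bondal--Kapranov saturation argument, is valid but not needed. Your justification of the Hochschild homology invariance, via a dg lift of the Fourier--Mukai functor $L\pi^*$ to the induced enhancements, is more careful than the paper's one-line appeal to the equivalence.
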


\begin{proof}
By \eqref{eq:orlov}, the functor $L\pi^*$ is fully faithful and its image is admissible.
The composite of the inclusion of $\calP$ with this functor is again an admissible embedding,
since both inclusions have left and right adjoints.
Hence $L\pi^*(\calP)$ is nonzero, admissible, and equivalent to $\calP$.
The equivalence preserves $K_0$ and Hochschild homology, proving the claim.
\end{proof}

\begin{proof}[Proof of Theorem~\ref{thm:main-intro}]
Let $U_n$ be as in~\eqref{eq:Un} and $X=\Bl_{\bfp}\FF_n$ with $\bfp\in U_n$.
Lemmas~\ref{lem:contraction} and \ref{lem:configuration-map} give
\[ \pi:X \to Y=\Bl_{\boldsymbol q}\FF_3, \quad \boldsymbol q\in U_3. \]
By Theorem \ref{thm:base}, the category $\Dcat^b(Y)$ contains the phantom
$\calP_3$. Lemma \ref{lem:phantom-pullback} gives the phantom $\calP_n=L\pi^*(\calP_3)$ in $\Dcat^b(X)$.
More precisely, with $k=n-3$, the semiorthogonal decompositions \eqref{eq:orlov} and \eqref{eq:base-sod} give
\begin{equation}\label{eq:general-sod}
 \Dcat^b(X)=\bigl\langle
 \calO_{\Gamma_1}(-1),\ldots,\calO_{\Gamma_k}(-1),
 \calP_n,L\pi^*L_0,\ldots,L\pi^*L_{12}
 \bigr\rangle.
\end{equation}
When $n=3$, $\pi$ is the identity and the first $k$ components are absent. This proves the theorem for every $n\>=3$.
\end{proof}

We finish by combining Theorem~\ref{thm:main-intro} with the previously
known cases.

\begin{corollary}\label{cor:all-n}
For every nonnegative integer $n$, set $d(n)=6+\max\{3,n\}$.
The blow-up of $\FF_n$ at $d(n)$ general points admits a phantom subcategory.
In particular, the required numbers of points are
\[ d(n)= \begin{cases}
   9, & 0\=< n\=<3,\\
 n+6, & n\>=4.
 \end{cases} \]
\end{corollary}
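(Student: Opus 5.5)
The corollary splits into two ranges of $n$. For $n\>=3$ we have $\max\{3,n\}=n$, so $d(n)=n+6$, and the statement is exactly Theorem~\ref{thm:main-intro}. Here ``general points'' means the nonempty Zariski open subset $U_n\subseteq\Conf_{n+6}(\FF_n)$ from \eqref{eq:Un}. At $n=3$ the two formulas agree: $d(3)=9=3+6$. So the displayed case distinction is consistent, and it is just the arithmetic of $\max\{3,n\}$.

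For $0\=<n\=<2$ we have $d(n)=9$, and I would cite the known results rather than reprove them. For $n=2$, \cite[Theorem 1.3]{KKLetc2025} gives a phantom on the blow-up of $\FF_2$ at nine general points. For $n=1$, note that $\FF_1=\Bl_{x}\PP^2$. Blowing up nine general points of $\FF_1$, away from the exceptional curve, gives the blow-up of $\PP^2$ at ten points. The correspondence between configurations is an isomorphism onto an open subset, so general points on $\FF_1$ correspond to general ten-point configurations in $\PP^2$. Krah's theorem \cite{Krah2024} then applies directly.

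For $n=0$, use that $\Bl_{x}(\PP^1\times\PP^1)\cong\Bl_{y,y'}\PP^2$. Hence blowing up $\FF_0$ at nine general points gives the blow-up of $\PP^2$ at eleven points. This contains $\Bl$ of $\PP^2$ at ten points as an intermediate blow-up. Indeed, $\Bl_{\bfp}\FF_0$ is the blow-up of $\Bl_{p_1}\FF_0\cong\Bl_{y,y'}\PP^2$ at eight points, hence a blow-up at one further point of a ten-point blow-up of $\PP^2$. The ten points involved are general when $\bfp$ is general. Krah's phantom then pulls back by Lemma~\ref{lem:phantom-pullback}. Alternatively, one can cite \cite[Theorem 1.1]{KKLetc2025} for eleven general points on $\PP^2$.

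The only step requiring care is the genericity bookkeeping for $n\in\{0,1\}$. The birational identifications above are isomorphisms between dense open subsets of the configuration spaces. Therefore the preimage of Krah's nonempty open set is again nonempty and open. Note that the ten-point configuration of $\PP^2$ produced in the $n=0$ case is general, not special, so Krah applies. Everything else is immediate from Theorem~\ref{thm:main-intro}.
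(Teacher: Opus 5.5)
Your treatment of $n\geqslant 3$, $n=2$ and $n=1$ is the same as the paper's: Theorem~\ref{thm:main-intro}, \cite[Theorem 1.3]{KKLetc2025}, and Krah's ten-point phantom via $\FF_1\cong\Bl_x\PP^2$.

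\textbf{The case $n=0$ rests on a miscount.} Start from $\Bl_{p_1}\FF_0\cong\Bl_{y,y'}\PP^2$ and blow up the remaining eight points. This gives a blow-up of $\PP^2$ at $2+8=10$ points, not eleven. So $\Bl_{\bfp}\FF_0$ is itself a ten-point blow-up of $\PP^2$, not a one-point blow-up of one.

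The invariants confirm this. $\Bl_{\bfp}\FF_0$ has Picard rank $2+9=11$ and $K^2=8-9=-1$, the same as a ten-point blow-up of $\PP^2$. An eleven-point blow-up has Picard rank $12$ and $K^2=-2$. Equivalently, $K_0$ has rank $13$ by Lemma~\ref{lem:additive-invariants}, not $14$.

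Consequently three parts of your $n=0$ argument fail:
\begin{itemize}
\item the assertion ``a blow-up at one further point of a ten-point blow-up of $\PP^2$'' is false;
\item your appeal to Lemma~\ref{lem:phantom-pullback} refers to a blow-up morphism that does not exist;
\item the alternative citation of \cite[Theorem 1.1]{KKLetc2025} does not apply, since $\Bl_{\bfp}\FF_0$ is not an eleven-point blow-up of $\PP^2$.
\end{itemize}

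\textbf{The repair is immediate and is the paper's argument.} We have $\Bl_{\bfp}\FF_0\cong\Bl_{y,y',q_2,\ldots,q_9}\PP^2$, so Krah's theorem applies directly, exactly as for $n=1$, once the ten points are known to be general.

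\textbf{A smaller point on genericity.} You describe the correspondence of configurations as an isomorphism between dense open subsets. That cannot be literally true, since $\dim\Conf_9(\FF_n)=18<20=\dim\Conf_{10}(\PP^2)$. For $n=1$, for instance, the image lies in the closed locus where one of the ten points is the fixed point $x$; the correspondence only matches moduli modulo automorphisms.

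The clean argument is this. Whether the blow-up carries a phantom is unchanged under $\mathrm{PGL}_3$ and under permuting the points. So the saturation of Krah's open set is a nonempty open set stable under these actions. Because $\mathrm{PGL}_3$ is transitive on points, this saturated set meets the relevant locus in a nonempty open subset. The paper is equally brief here, so this is a matter of precision rather than a gap.
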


\begin{proof}
The assertion for $n\>=3$ is Theorem~\ref{thm:main-intro}.
For $n=2$, it is \cite[Theorem 1.3]{KKLetc2025}.
For $n=0,1$, it follows from Krah's construction on the blow-up of $\PP^2$ at ten general
points \cite{Krah2024}, as observed in the introduction to~\cite{KKLetc2025}.
Indeed, $\FF_1$ is the blow-up of $\PP^2$ at one point,
and the blow-up of $\FF_0$ at one point is isomorphic to the blow-up of $\PP^2$ at two distinct points. These descriptions identify the corresponding surfaces for general remaining points.
\end{proof}

\begin{remark} \rm
Corollary \ref{cor:all-n} establishes existence at the point count in \cite[Conjecture 4.11]{KKLetc2025}.
It does not assert that this is the smallest possible number of points.
The phantoms obtained for $n>3$ are equivalent to phantoms on the corresponding nine-point blow-ups of $\FF_3$.
\end{remark}

\paragraph{Authors' Contributions}

The order of authors is alphabetical, and all authors contributed equally to the conception, methodology, derivation, and writing of this paper.

\paragraph{Competing Interests}

The {authors} declare that they have no conflicts of interest as defined by the journal, nor any other interests that could be perceived as influencing the results presented in this paper.

\paragraph{Data availability}
{The point coordinates, determinant values, and code used in the computation are included in this paper and its appendix.}

\paragraph{Ethical Approval}

This article does not require ethical approval.

\paragraph{Funding}
Yu-Zhe Liu is supported by
the National Natural Science Foundation of China (Grant Nos. 12401042 and 12561008),
the Science and Technology Foundation of the Guizhou S~\&~T Department (Grant Nos. KJLYRC[2026]091, VZD[2026]001, ZD[2025]085 and ZK[2024]YiBan066),
and Scientific Research Foundation of Guizhou University (Grant No. [2023]16).

\appendix

\input{interpolation_matrix_appendix.tex}

%

\end{document}

%% file: interpolation_matrix_appendix.tex
\section{Appendix: code for the interpolation matrices}\label{app:interpolation-code}

The following Python 3 program constructs the forty matrices in \eqref{eq:matrix} using the point coordinates in Table \ref{tab:points}.
It computes their determinants over $\mathbb F_{65521}$ by Gaussian elimination. It requires NumPy. Running the program without arguments checks all eight types.
Passing type names as arguments checks only those types, for example, \texttt{python verify\_matrices.py I VII}.
Each output line contains the type, the distinguished point (or a dash), the matrix size, the determinant modulo $65521$, and the elapsed time.

\begingroup
\lstset{
  language=Python,
  basicstyle=\ttfamily\scriptsize,
  breaklines=true,
  columns=fullflexible,
  keepspaces=true,
  showstringspaces=false,
  frame=single
}
\begin{lstlisting}
import math
import sys
import time
import numpy as np

p = 65521
ts = [1, 2, 3, 4, 5, 6, 7, 8, 9]
zs = [38495, 64320, 17708, 5484, 47004, 37992, 33632, 20617, 58817]
types = [
    ('I', 4, 11, 2, 3), ('II', 5, 15, 3, 2),
    ('III', 9, 26, 5, None), ('IV', 17, 52, 10, None),
    ('V', 21, 63, 12, 13), ('VI', 26, 78, 15, None),
    ('VII', 30, 89, 17, 18), ('VIII', 35, 104, 20, None),
]

def matrix(a, b, mults):
    cols = [(j, r) for j in range(a + 1) for r in range(b - 3*j + 1)]
    rows = [(i, alpha, beta) for i, m in enumerate(mults)
            for alpha in range(m) for beta in range(m-alpha)]
    n = len(cols)
    assert len(rows) == n
    out = np.empty((n, n), dtype=np.int64)
    coeffs = {(r, alpha): math.comb(r, alpha) % p
              for _, r in cols for alpha in range(max(mults)) if alpha <= r}
    jcoeffs = {(j, beta): math.comb(j, beta) % p
               for j, _ in cols for beta in range(max(mults)) if beta <= j}
    for row, (i, alpha, beta) in enumerate(rows):
        t, z = ts[i], zs[i]
        out[row] = [coeffs.get((r, alpha), 0) * jcoeffs.get((j, beta), 0)
                    * pow(t, r-alpha, p) * pow(z, j-beta, p) % p
                    if r >= alpha and j >= beta else 0 for j, r in cols]
    return out

def determinant(a):
    n = a.shape[0]
    d = 1
    for c in range(n):
        pivot_rows = np.flatnonzero(a[c:, c])
        if not len(pivot_rows):
            return 0
        s = c + int(pivot_rows[0])
        if s != c:
            a[[c, s]] = a[[s, c]]
            d = -d
        pivot = int(a[c, c])
        d = d * pivot % p
        if c + 1 < n:
            factors = (a[c+1:, c] * pow(pivot, -1, p)) % p
            # Work in blocks to control temporary memory.
            for lo in range(c+1, n, 128):
                hi = min(n, lo+128)
                a[lo:hi, c+1:] -= factors[lo-c-1:hi-c-1, None] * a[c, c+1:]
                a[lo:hi, c+1:] %= p
            a[c+1:, c] = 0
    return d % p

names = set(sys.argv[1:])
for name, a, b, regular, special in types:
    if names and name not in names:
        continue
    for distinguished in range(9) if special is not None else [None]:
        ms = [regular] * 9
        if distinguished is not None:
            ms[distinguished] = special
        start = time.time()
        mat = matrix(a, b, ms)
        print(name, distinguished + 1 if distinguished is not None else '-',
              mat.shape[0], determinant(mat),
              f'{time.time()-start:.1f}s', flush=True)
\end{lstlisting}
\endgroup